\newtheorem{thm}{Theorem}[section]
\newtheorem{defin}{Definition}[section]
\newtheorem{lem}{Lemma}[section]
\newtheorem{exa}{Example}[section]
\newtheorem{rem}{Remark}[section]
\newtheorem{pro}{Proposition}[section]
\title{Dual Ramanujan-Fourier series}
\author{Noboru Ushiroya}
\date{} % Empty date or tweak it according to your needs
\journal{}
\begin{document}

%%%%%%%%%%%%%%%%%%%%%%%%%%%%%%%
% Add the title to the document
%%%%%%%%%%%%%%%%%%%%%%%%%%%%%%%
\maketitle

%%%%%%%%%%%%%%%%%%%%%
% Dedication (if any)
%%%%%%%%%%%%%%%%%%%%%
%\dedication{in the memory of William V.D.\ Hodge}

%%%%%%%%%%%%%%%%%
% Thanks (if any)
%%%%%%%%%%%%%%%%%

%\thanks{We thank \href{http://episciences.org}{episciences.org} for providing open access hosting of the electronic journal \emph{Hardy-Ramanujan Journal}}

%%%%%%%%%%%%%%%%%%%%%%%%%%%%%%%%%%%%%%%%%%%%%%%%%%%%%%%%%%
% Add abstract, Keywords, MSC classification (recommended)
% Never remove prelims section, make it rather empty
%%%%%%%%%%%%%%%%%%%%%%%%%%%%%%%%%%%%%%%%%%%%%%%%%%%%%%%%%%
\begin{prelims}

\def\abstractname{Abstract}
\abstract{Let $c_q(n)$ be the Ramanujan sums. Many results concerning Ramanujan-Fourier series $f(n)=\sum_{q=1}^\infty a_q c_q (n)$ are obtained by many mathematicians. In this paper we study series of the form $f(q)=\sum_{n=1}^\infty a_n c_q (n)$, which we call dual Ramanujan-Fourier series. We extend Lucht's theorem and Delange's theorem to this case and obtain some results.}

\keywords{Ramanujan-Fourier series, Ramanujan sums, arithmetic functions, multiplicative functions.}

\MSCclass{11A25, 11N37}

% Add table of contents (optional)
%\tableofcontents

\end{prelims}

%%%%%%%%%%%%%%%%%%%%%
% Content begins here
%%%%%%%%%%%%%%%%%%%%%

\section{Introduction}

For $q,n \in \mathbb{N}=\{ 1,2, \cdots \}$, the Ramanujan sums $c_q (n)$ are defined in \cite{Ramanujan} by
\begin{equation}
c_q(n)=\sum_{\substack{k=1 \\ (k,q)=1}}^q \exp (\frac{2 \pi i kn}{q}) , \nonumber
\end{equation}
where $(k,q)$ is the greatest common divisor of $k$ and $q$.
Let $f:\mathbb{N} \mapsto \mathbb{C} $ be an arithmetic function. Ramanujan \cite{Ramanujan} investigated its Ramanujan-Fourier series which is an infinite series of the form
\begin{equation}
\label{eq1-1}
f(n) = \sum_{q=1}^\infty a_q c_q (n),
\end{equation}
where $a_q$ are called the Ramanujan-Fourier coefficients of $f$, and he obtained the following results.
\begin{align}
& \frac{\sigma_s (n)}{n^s} =\zeta(s+1) \sum_{q=1}^\infty \frac{c_q (n)}{q^{s+1}} , \label{eq:sigma} \\
& \frac{\varphi_s (n)}{n^s} =\frac{1}{\zeta(s+1)} \sum_{q=1}^\infty \frac{\mu (q)}{\varphi_{s+1} (q)} c_q (n) , \label{eq:varphi} \\
& \tau (n)=- \sum_{q=1}^\infty \frac{\log q}{q} c_q (n) , \label{eq:tau} \\
& r(n) =\pi \sum_{q=1}^\infty \frac{(-1)^{q-1}}{2q-1} c_{2q-1} (n) , \label{eq:r}
\end{align}
where $\sigma_s (n)=\sum_{d | n} d^s$ with $s>0$,  \ $\zeta(s)$ is the Riemann zeta function, \ $\varphi_s (n)= n^s \prod_{p | n} (1-1/p^s) $, \ $\tau (n)=\sum_{d | n} 1$ , \ $\mu$ is the M$\rm{\ddot{o}}$bius function and $r(n)$ is the number of representations of $n$ as the sum of two squares.

 Ramanujan \cite{Ramanujan} also investigated dual Ramanujan-Fourier series of the form
\begin{equation}
\nonumber
f(q) = \sum_{n=1}^\infty a_n c_q (n),
\end{equation}
and he obtained the following results.
\begin{align}
 &(\mathrm{id}^{1-s}*\mu)(q)=\varphi_{1-s} (q)=\frac{1}{\zeta(s)} \sum_{n=1}^\infty \frac{c_q(n)}{n^s} \quad \mathrm{if}  \ \ s>1, \label{eq:ra-1}\\
& \Lambda (q)=- \sum_{n=1}^\infty \frac{c_q (n)}{n} \quad \mathrm{if}  \ \ q \geqq 2 , \label{eq:ra-2}
\end{align}
where $\mathrm{id}$ is the function $\mathrm{id} (n)=n$, $f *g$ denotes the Dirichlet convolution of $f$ and $g$, and  $\Lambda (q) $ denotes the von Mangoldt function.

 We investigate dual Ramanujan-Fourier series and obtain theorems which are extensions of the results due to Delange and Lucht. Several examples are given. The method used in this paper is quite elementary.

%%%%%%%%%%%%%%%%% Section2 %%%%%%%  Preliminaries  %%%%%%%%%%%%%%%%%%%%%%%%%%%%%%%%%%%%%%%%%%%%%%%%%%%%%%
%%%%%%%%%%%%%%%%%%%%%%%%%%%%%%%%%%%%%%%%%%%%%%%%%%%%%%%%%%%%%%%%%%%%%%%%%%%%%%%%%%%%%%%%
%%%%%%%%%%%%%%%%%%%%%%%%%%%%%%%%%%%%%%%%%%%%%%%%%%%%%%%%%%%%%%%%%%%%%%%%%%%%%%%%%%%%%%%%
%%%%%%%%%%%%%%%%%%%%%%%%%%%%%%%%%%%%%%%%%%%%%%%%%%%%%%%%%%%%%%%%%%%%%%%%%%%%%%%%%%%%%%%%
%%%%%%%%%%%%%%%%%%%%%%%%%%%%%%%%%%%%%%%%%%%%%%%%%%%%%%%%%%%%%%%%%%%%%%%%%%%%%%%%%%%%%%%%

\section{Preliminaries}

Let $\delta (n)= \begin{cases} 1 \ \ \mathrm{if} \ \ n=1 \\ 0 \ \ \mathrm{if} \ \ n>1 \end{cases}$  and let $\delta (m,n)= \begin{cases} 1 \ \ \mathrm{if} \ \ m=n \\ 0 \ \ \mathrm{if} \ \ m \neq n. \end{cases}$ \\
We set $D(m,n)=m \delta(m,n)$. Obviously, $D(m,n)=D(n,m)$ holds.
 
Let $f$, $g: \mathbb{N} \mapsto \mathbb{C}$ be arithmetic functions.
The Dirichlet convolution of $f$ and $g$ is defined by
\begin{equation}
\nonumber
(f*g)(n)=\sum_{d | n} f(d) g( n/d). 
\end{equation}

For two arithmetic functions, one of which is a function of one variable, the other a function of two variables, we define similar types of convolutions as follows.
\begin{defin}
Let $f: \mathbb{N} \mapsto \mathbb{C}$ be an arithmetic function and $g: \mathbb{N} \times \mathbb{N} \mapsto \mathbb{C}$ be an arithmetic function of two variables.
We define $f \underset{\ell}{*} g : \mathbb{N} \times \mathbb{N} \mapsto \mathbb{C}$ and $g \underset{r}{*} f : \mathbb{N} \times \mathbb{N} \mapsto \mathbb{C} $ as follows.
\begin{align*}
(f \underset{\ell}{*} g)(q,n)=& (f( \cdot) *g( \cdot, n))(q)=\sum_{d | q} f(\frac{q}{d}) g( d,n) , \\
(g \underset{r}{*} f)(q,n)=& (g( q, \cdot) * f( \cdot))(n)=\sum_{d | n}  g( q,d) f(\frac{n}{d}). 
\end{align*}

\end{defin}

It is clear that the following lemma holds.

%%%%%%%%%%%%%%%%%% Lemmma 2.1 %%%%%%%%%%%%%%%%%%%%%%%%%%%%%%%%%%%%%%%%%

\begin{lem}
\label{lem2-1}
Let $f,h: \mathbb{N} \mapsto \mathbb{C}$ be arithmetic functions and let $g: \mathbb{N} \times \mathbb{N} \mapsto \mathbb{C}$ be an arithmetic function of two variables.
Then we have
\begin{align*}
& (f \underset{\ell}{*} g) \underset{r}{*} h =f \underset{\ell}{*} (g \underset{r}{*} h)  ,  \\
& f \underset{\ell}{*} (h \underset{\ell}{*} g)= (f*h) \underset{\ell}{*} g ,  \\
& (g \underset{r}{*} f) \underset{r}{*}  h= g \underset{r}{*} (f*h) . \nonumber 
\end{align*}
\end{lem}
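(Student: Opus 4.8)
The plan is to verify all three identities by the same elementary device: expand both sides using the definitions of $\underset{\ell}{*}$, $\underset{r}{*}$ and $*$, and observe that each side collapses to the same finite multiple sum. Since every summation index ranges over divisors, all the sums are finite and may be freely rearranged, so there is nothing to check beyond matching the indices.

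For the first identity, unwinding the left-hand side gives
\[
\bigl((f \underset{\ell}{*} g) \underset{r}{*} h\bigr)(q,n) = \sum_{e \mid n} (f \underset{\ell}{*} g)(q,e)\, h(n/e) = \sum_{e \mid n} \sum_{d \mid q} f(q/d)\, g(d,e)\, h(n/e),
\]
while the right-hand side gives
\[
\bigl(f \underset{\ell}{*} (g \underset{r}{*} h)\bigr)(q,n) = \sum_{d \mid q} f(q/d)\, (g \underset{r}{*} h)(d,n) = \sum_{d \mid q} \sum_{e \mid n} f(q/d)\, g(d,e)\, h(n/e).
\]
Interchanging the two finite summations identifies the two expressions, which proves the first relation.

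For the second and third identities I would freeze the variable that is not acted upon and reduce to the associativity of the ordinary Dirichlet convolution. For the second, fix $n$ and put $G(q) := g(q,n)$; then the definitions give $\bigl(f \underset{\ell}{*} (h \underset{\ell}{*} g)\bigr)(q,n) = \bigl(f * (h * G)\bigr)(q)$ and $\bigl((f*h) \underset{\ell}{*} g\bigr)(q,n) = \bigl((f*h) * G\bigr)(q)$, which are equal because $*$ is associative. Symmetrically, for the third identity fix $q$ and put $H(n) := g(q,n)$; then $\bigl((g \underset{r}{*} f) \underset{r}{*} h\bigr)(q,n) = \bigl((H * f) * h\bigr)(n)$ and $\bigl(g \underset{r}{*} (f*h)\bigr)(q,n) = \bigl(H * (f*h)\bigr)(n)$, again equal by associativity of $*$. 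One could instead expand each side directly, reaching $\sum_{abd = q} f(a)\, h(b)\, g(d,n)$ for the second identity and $\sum_{abd = n} g(q,a)\, f(b)\, h(d)$ for the third, after the substitutions $q = abd$ resp.\ $n = abd$.

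As the authors note, the statement is clear, so there is no genuine obstacle; the only point demanding care is the orientation of the divisor arguments---in $f \underset{\ell}{*} g$ the complementary divisor $q/d$ is the argument of $f$ while $d$ is the first argument of $g$, whereas in $g \underset{r}{*} f$ the divisor $d$ is the second argument of $g$ and $n/d$ is the argument of $f$---so that the substitutions above line up correctly. Once these conventions are pinned down, each identity is a one-line rearrangement.
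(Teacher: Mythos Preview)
Your proof is correct. The paper gives no proof of this lemma at all---it simply states ``It is clear that the following lemma holds''---so your expansion of the definitions (together with the reduction of the second and third identities to ordinary associativity of $*$ by freezing the inert variable) is exactly the routine verification the authors left to the reader.
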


We note that $((f \underset{\ell}{*} g) \underset{r}{*} h)(q,n) $ can also be written as $\sum_{d_1 | q, \ d_2 | n} f(q/d_1) g(d_1,d_2) h(n/d_2).$
We simply write $f \underset{\ell}{*} g \underset{r}{*} h$ instead of $(f \underset{\ell}{*} g) \underset{r}{*} h$ or $f \underset{\ell}{*} (g \underset{r}{*} h)$.

It is easy to see that the following lemma holds.

%%%%%%%%%%%%%%%%%%% Lemmma 2.2 %%%%%%%%%%%%%%%%%%%%%%%%%%%%%%

\begin{lem}
\label{lem2-2}
Let $f: \mathbb{N} \mapsto \mathbb{C}$ be an arithmetic function. Then we have
\begin{align*}
(f \underset{\ell}{*} D)(q,n) & =I_{n \mid q}  f(\frac{q}{n}) n =\left\{ \begin{array}{cl} 
  f(\frac{q}{n}) n  & \mathrm{if} \ \ n \mid q  \\ 0 &  \mathrm{if}  \ \ n \nmid q ,  \end{array}  \right.  \\
(D \underset{r}{*} f)(q,n) & =I_{q \mid n} f(\frac{n}{q}) q = \left\{ \begin{array}{cl} 
  f(\frac{n}{q}) q  & \mathrm{if} \ \ q \mid n  \\ 0 &  \mathrm{if}  \ \ q \nmid n , \end{array}  \right.
\end{align*}
where $I_{n \mid q}=\begin{cases} 1 \ \ \mathrm{if} \ \ n \mid q \\ 0 \ \ \mathrm{if} \ \ n \nmid q. \end{cases} $
\end{lem}
\begin{proof}
By definiton, we have
\begin{align*}
(f \underset{\ell}{*} D)(q,n)=\sum_{d | q} f(\frac{q}{d}) D(d,n)=\sum_{d | q} f(\frac{q}{d}) n \delta (d,n)=I_{n \mid q}  f(\frac{q}{n}) n. 
\end{align*}
The proof of the second assertion is similar. \qed
\end{proof}

Let $f$, $g: \mathbb{N} \times \mathbb{N} \mapsto \mathbb{C}$ be arithmetic functions of two variables.
The Dirichlet convolution of $f$ and $g$ is defined by
\begin{equation}
\nonumber
(f*g)(q,n)=\sum_{d_1 | q, \ d_2|n} f(d_1 , d_2) g( q/d_1,n/d_2 ). 
\end{equation}

Let $f:\mathbb{N} \mapsto \mathbb{C}$ be an arithmetic function. We note that, if we define $f \otimes \delta$ , $\delta \otimes f : \mathbb{N} \times \mathbb{N} \mapsto \mathbb{C}$ by\begin{align*}
(f \otimes \delta) (q,n)=&f(q) \delta(n), \\
(\delta \otimes f) (q,n)=&\delta(q) f(n),
\end{align*} 
then we have for $g: \mathbb{N} \times \mathbb{N} \mapsto \mathbb{C}$
\begin{align*}
(f \underset{\ell}{*} g) (q,n) & =((f \otimes \delta )* g) (q,n) , \\
(g \underset{r}{*} f) (q,n) & =(g*(\delta \otimes f)) (q,n).
\end{align*}

We say that $f : \mathbb{N}  \mapsto \mathbb{C}$ is a multiplicative function if $f$ satisfies\begin{equation}
\nonumber
f(n_1  n_2)=f(n_1) f(n_2) 
\end{equation}
for any $ n_1, n_2 \in \mathbb{N} $ satisfying $ (n_1, n_2)=1.$
It is well known that if $f$ and $g$ are multiplicative functions, then $f*g$ also becomes a multiplicative function. 
We say that $f : \mathbb{N} \times \mathbb{N} \mapsto \mathbb{C}$ is a multiplicative function of two variables if $f$ satisfies\begin{equation}
\nonumber
f(q_1 q_2, n_1  n_2)=f(q_1 , n_1 )  \ f( q_2,n_2) 
\end{equation}
for any $q_1 ,q_2,  n_1, n_2 \in \mathbb{N} $ satisfying $ (q_1 n_1, \  q_2 n_2)=1.$
It is well known that if $f$ and $g$ are multiplicative functions of two variables, then $f*g$ also becomes a multiplicative function of two variables. 

It is easy to see that the following lemma holds.

%%%%%%%%%%%%%%% Lemmma 2.3 %%%%%%%%%%%%%%%%%%%%%%%%%%%

\begin{lem}
\label{lem2-3}
Let $f,h: \mathbb{N} \mapsto \mathbb{C}$ be multiplicative functions and let $g: \mathbb{N} \times \mathbb{N} \mapsto \mathbb{C}$ be a multiplicative function of two variables.
Then $f \underset{\ell}{*} g$,  \  $g \underset{r}{*} h$   and  $f \underset{\ell}{*} g \underset{r}{*} h$ are all multiplicative functions of two variables.
\end{lem}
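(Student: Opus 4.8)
The plan is to reduce everything to the already-mentioned fact that the ordinary Dirichlet convolution of two multiplicative functions of two variables is again multiplicative of two variables, using the observation recorded just before the lemma that $f\underset{\ell}{*}g=(f\otimes\delta)*g$ and $g\underset{r}{*}h=g*(\delta\otimes h)$. The first step is therefore to check that the auxiliary two-variable functions $f\otimes\delta$ and $\delta\otimes h$ are multiplicative in the two-variable sense. For $f\otimes\delta$ this amounts to verifying $f(q_1q_2)\delta(n_1n_2)=f(q_1)\delta(n_1)f(q_2)\delta(n_2)$ whenever $(q_1n_1,q_2n_2)=1$: when both $n_1=n_2=1$ both sides are $f(q_1q_2)=f(q_1)f(q_2)$ (valid since $(q_1,q_2)=1$ as a consequence of the coprimality hypothesis), and when $n_1n_2>1$ at least one $\delta$ factor on each side vanishes, so both sides are $0$. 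The argument for $\delta\otimes h$ is symmetric, and one notes $\delta$ itself is multiplicative of one variable so this is really the special case $g=\delta\otimes f$ etc., but it is cleanest to just check it directly.

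Granting that, the second step is immediate: $f\underset{\ell}{*}g=(f\otimes\delta)*g$ is a Dirichlet convolution of two multiplicative functions of two variables, hence multiplicative of two variables; likewise $g\underset{r}{*}h=g*(\delta\otimes h)$ is multiplicative of two variables. For the triple convolution, apply Lemma~\ref{lem2-1} to write $f\underset{\ell}{*}g\underset{r}{*}h=(f\underset{\ell}{*}g)\underset{r}{*}h=(f\underset{\ell}{*}g)*(\delta\otimes h)$, which is again a Dirichlet convolution of two multiplicative two-variable functions (the first factor being multiplicative of two variables by the step just completed), hence multiplicative of two variables.

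There is no serious obstacle here; the lemma is genuinely routine once the bridge to ordinary Dirichlet convolution is in place. The one point that deserves care — and the only place an error could creep in — is the bookkeeping in the coprimality hypothesis: from $(q_1n_1,q_2n_2)=1$ one must correctly extract that $(q_1,q_2)=1$, $(n_1,n_2)=1$, $(q_1,n_2)=1$ and $(q_2,n_1)=1$ simultaneously, so that one is entitled to split $f(q_1q_2)=f(q_1)f(q_2)$, $h(n_1n_2)=h(n_1)h(n_2)$, and to apply multiplicativity of $g$ after the convolution has broken the ranges of summation into coprime pieces indexed by divisors of $q_1,q_2,n_1,n_2$. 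Since all of this is packaged inside the quoted fact that $*$ preserves two-variable multiplicativity, the proof can remain short: verify $f\otimes\delta$ and $\delta\otimes f$ are multiplicative of two variables, then invoke that fact together with Lemma~\ref{lem2-1}.
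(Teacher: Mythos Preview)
Your proposal is correct and follows essentially the same approach as the paper: the paper's proof also observes that $f\otimes\delta$ is multiplicative as a function of two variables whenever $f$ is multiplicative, and then invokes the identity $f\underset{\ell}{*}g=(f\otimes\delta)*g$ together with the closure of two-variable multiplicativity under Dirichlet convolution. Your write-up simply makes explicit the verification of multiplicativity of $f\otimes\delta$ and the appeal to Lemma~\ref{lem2-1} for the triple convolution, details the paper leaves to the reader.
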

\begin{proof}
If $f$ is multiplicative, then $f \otimes \delta$ is also multiplicative as a function of two variables. Therefore $f \underset{\ell}{*} g=(f \otimes \delta )* g$ is also multiplicative as a function of two variables. Similarly, $g \underset{r}{*} h$   and  $f \underset{\ell}{*} g \underset{r}{*} h$ are multiplicative functions of two variables. \qed
\end{proof}

Ramanujan \cite{Ramanujan} proved that $c_q(n)$ can be written as 
\begin{equation}
\nonumber
c_q(n)=\sum_{d \mid (q,n)} \mu (q/d) d. 
\end{equation}
We show that  $c_q(n)$ can also be written as follows.

%%%%%%%%%%%%%%%%%%%%%%%% Lemmma 2.4 %%%%%%%%%%%%%%%%%%%%%%%%%%%%%%%%%%%%%%%%%%

\begin{lem}
\label{lem2-4}
\begin{equation}
\nonumber
c_q (n)=(\mu \underset{\ell}{*} D \underset{r}{*} \bold{1})(q,n),
\end{equation}
where $\bold{1}(n)=1 $ for every $n \in \mathbb{N}$.
\end{lem}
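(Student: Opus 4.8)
The plan is to evaluate the right-hand side directly from the definitions, using Lemma~\ref{lem2-2} to dispose of the middle factor $D$, and then to recognize the resulting sum as Ramanujan's classical formula $c_q(n)=\sum_{d\mid(q,n)}\mu(q/d)\,d$ quoted just above the statement.

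First I would invoke the associativity from Lemma~\ref{lem2-1} to read $\mu \underset{\ell}{*} D \underset{r}{*} \bold{1}$ as $(\mu \underset{\ell}{*} D) \underset{r}{*} \bold{1}$. Applying Lemma~\ref{lem2-2} with $f=\mu$ gives $(\mu \underset{\ell}{*} D)(q,m)=I_{m\mid q}\,\mu(q/m)\,m$. Then I would unfold the outer $\underset{r}{*}\bold{1}$ by its definition:
\[
\bigl((\mu \underset{\ell}{*} D) \underset{r}{*} \bold{1}\bigr)(q,n)
=\sum_{d\mid n}(\mu \underset{\ell}{*} D)(q,d)\,\bold{1}(n/d)
=\sum_{d\mid n} I_{d\mid q}\,\mu(q/d)\,d
=\sum_{\substack{d\mid q\\ d\mid n}}\mu(q/d)\,d .
\]
Since the conditions $d\mid q$ and $d\mid n$ together are equivalent to $d\mid(q,n)$, the last sum is $\sum_{d\mid(q,n)}\mu(q/d)\,d$, which is exactly Ramanujan's expression for $c_q(n)$; this finishes the proof.

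I do not expect a genuine obstacle: the argument is pure bookkeeping with the definitions, and the only thing needing a bit of care is tracking which variable each one-sided convolution acts on. As a sanity check one can run the computation in the opposite order, first using the second half of Lemma~\ref{lem2-2} to get $(D \underset{r}{*} \bold{1})(q,n)=I_{q\mid n}\,q$ and then convolving on the left with $\mu$ via the definition of $\underset{\ell}{*}$, which again collapses to $\sum_{d\mid(q,n)}\mu(q/d)\,d$.
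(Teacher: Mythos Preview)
Your proposal is correct and is essentially the paper's own argument: the paper simply expands $(\mu \underset{\ell}{*} D \underset{r}{*} \bold{1})(q,n)$ in one line as the double sum $\sum_{d_1\mid q,\ d_2\mid n}\mu(q/d_1)\,d_1\,\delta(d_1,d_2)\,\bold{1}(n/d_2)$ and collapses it via the $\delta$ to $\sum_{d\mid(q,n)}\mu(q/d)\,d$, whereas you reach the same sum in two steps through Lemmas~\ref{lem2-1} and~\ref{lem2-2}. The content is identical.
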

\begin{proof}
By definition, we have
\begin{equation}
\nonumber
(\mu \underset{\ell}{*} D \underset{r}{*} \bold{1} )(q,n) =\sum_{d_1 | q, \ d_2 | n} \mu(q/d_1) d_1 \delta (d_1,d_2) \bold{1} (n/d_2)=\sum_{d | (q,n)} \mu(q/d) d.        
\end{equation}
\qed
\end{proof}

Hardy \cite{Hardy} proved that, for fixed $n$, $q \mapsto c_q(n)$ is a multiplicative function. Johnson \cite{Johnson} proved that $(q,n) \mapsto c_q(n)$ is a multiplicative function of two variables. 
We remark that the multiplicativity of $(q,n) \mapsto c_q(n)$ is trivial from Lemma \ref{lem2-4} and Lemma \ref{lem2-3} since $D: \mathbb{N} \times \mathbb{N} \mapsto \mathbb{C}$ is multiplicative as a function of two variables.

It is well known that the following holds (\cite{Sivaramakrishnan}).
For a fixed integer $k$,
\begin{equation}
\nonumber
\sum_{q \mid k} c_q(n)=I_{k \mid n} k =\left\{ \begin{array}{cl} 
  k  & \mathrm{if} \ \ k \mid n  \\ 0 &  \mathrm{if}  \ \ k \nmid n.  \end{array}  \right.
\end{equation}
We give another expression of the above in the following lemma. We simply write $c$ instead of $c_{\cdot} (\cdot)$. 
We note that $\sum_{q \mid k} c_q(n)$ can be written as $(\bold{1} \underset{\ell}{*}  c )(k,n) $.

%%%%%%%%%%%%%%%%%%%%%%%%%%%% Lemma 2.5 %%%%%%%%%%%%%%%%%%%%%%%

\begin{lem}
\label{lem2-5}
We have
\begin{alignat*}{3}
&(\bold{1} \underset{\ell}{*}  c )(q,n) =q I_{q | n} &= & \left\{ \begin{array}{lllll} q  &  \mathrm{if}  & q \mid n \\ 0 &  \mathrm{if}  & q \nmid n,  \end{array}  \right.  \quad \quad  \mathrm{and} \\
&(c \underset{r}{*} \mu)(q,n) =n I_{n | q} \mu (\frac{q}{n}) &= & \left\{ \begin{array}{lllll} n \mu (\frac{q}{n}) & \mathrm{if} & n \mid q \\ 0 & \mathrm{if}  & n \nmid q.  \end{array}  \right.  
\end{alignat*}
\end{lem}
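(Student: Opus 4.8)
The plan is to deduce both identities formally from the representation $c = \mu \underset{\ell}{*} D \underset{r}{*} \bold{1}$ established in Lemma~\ref{lem2-4}, pushing the extra factor ($\bold{1}$ on the left, $\mu$ on the right) through the convolutions by means of the associativity relations of Lemma~\ref{lem2-1}, and then reading off the answer from Lemma~\ref{lem2-2}. The only auxiliary facts needed are the classical identity $\bold{1} * \mu = \delta$ together with the immediate observations that $\delta \underset{\ell}{*} g = g$ and $g \underset{r}{*} \delta = g$ for every $g : \mathbb{N} \times \mathbb{N} \mapsto \mathbb{C}$, since in $(\delta \underset{\ell}{*} g)(q,n) = \sum_{d \mid q} \delta(q/d) g(d,n)$ only the term $d = q$ survives, and likewise on the right. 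I also note that the first identity is merely the known formula $\sum_{q \mid k} c_q(n) = I_{k \mid n} k$ recast in the new notation, because $(\bold{1} \underset{\ell}{*} c)(q,n) = \sum_{d \mid q} c_d(n)$; so one could simply quote it, but it is just as short to re-derive it.

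For the first identity I would write $c = \mu \underset{\ell}{*} (D \underset{r}{*} \bold{1})$ and compute
\[
\bold{1} \underset{\ell}{*} c = \bold{1} \underset{\ell}{*} \bigl( \mu \underset{\ell}{*} (D \underset{r}{*} \bold{1}) \bigr) = (\bold{1} * \mu) \underset{\ell}{*} (D \underset{r}{*} \bold{1}) = \delta \underset{\ell}{*} (D \underset{r}{*} \bold{1}) = D \underset{r}{*} \bold{1},
\]
using the second relation of Lemma~\ref{lem2-1} for the middle step. By the second formula of Lemma~\ref{lem2-2} with $f = \bold{1}$ we then get $(D \underset{r}{*} \bold{1})(q,n) = I_{q \mid n}\, \bold{1}(n/q)\, q = q\, I_{q \mid n}$, which is the claim.

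For the second identity I would instead use the grouping $c = (\mu \underset{\ell}{*} D) \underset{r}{*} \bold{1}$ and, by the third relation of Lemma~\ref{lem2-1},
\[
c \underset{r}{*} \mu = \bigl( (\mu \underset{\ell}{*} D) \underset{r}{*} \bold{1} \bigr) \underset{r}{*} \mu = (\mu \underset{\ell}{*} D) \underset{r}{*} (\bold{1} * \mu) = (\mu \underset{\ell}{*} D) \underset{r}{*} \delta = \mu \underset{\ell}{*} D.
\]
Then the first formula of Lemma~\ref{lem2-2} with $f = \mu$ yields $(\mu \underset{\ell}{*} D)(q,n) = I_{n \mid q}\, \mu(q/n)\, n$, as required. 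I do not expect any genuine obstacle here: the computation is purely formal, and the only thing requiring care is choosing, in each of the two parts, the parenthesization of $\mu \underset{\ell}{*} D \underset{r}{*} \bold{1}$ that places a $\bold{1}$ adjacent to the incoming factor, so that the collapse $\bold{1} * \mu = \delta$ applies, and then invoking the correct one of the three associativity relations of Lemma~\ref{lem2-1}. A direct double-sum evaluation starting from $c_q(n) = \sum_{d \mid (q,n)} \mu(q/d) d$ is also possible but noticeably messier.
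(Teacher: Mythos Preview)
Your proof is correct and follows exactly the same route as the paper: both identities are obtained by writing $c = \mu \underset{\ell}{*} D \underset{r}{*} \bold{1}$, collapsing the adjacent $\bold{1}$ and $\mu$ to $\delta$ via the appropriate associativity relation from Lemma~\ref{lem2-1}, and then reading off the result from Lemma~\ref{lem2-2}. Your write-up is slightly more explicit (in particular about why $\delta \underset{\ell}{*} g = g$ and $g \underset{r}{*} \delta = g$), but the argument is identical.
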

\begin{proof}
Since $c=\mu \underset{\ell}{*} D \underset{r}{*} \bold{1}$ and $\bold{1} * \mu=\delta $, we have by Lemma \ref{lem2-2}
\begin{align*}
(\bold{1} \underset{\ell}{*}  c )(q,n) & = (\bold{1} \underset{\ell}{*} (\mu \underset{\ell}{*} D \underset{r}{*} \bold{1}))(q,n)= ((\bold{1} * \mu) \underset{\ell}{*} (D \underset{r}{*} \bold{1}))(q,n)=(D \underset{r}{*} \bold{1})(q,n)=q I_{q | n}  ,  \ \ \mathrm{and} \\
(c \underset{r}{*} \mu)(q,n) & = ((\mu \underset{\ell}{*} D \underset{r}{*} \bold{1}) \underset{r}{*} \mu )(q,n)= ((\mu \underset{\ell}{*} D) \underset{r}{*}( \bold{1} * \mu ))(q,n)=(\mu \underset{\ell}{*} D)(q,n)=n I_{n | q} \mu (\frac{q}{n}) .
\end{align*}
\qed
\end{proof}

%%%%%%%%%%%%%%%%%%%%%%%%%%%%%%%%%%%%%%%%%% section 3 %%%%%%%%%% Some Results %%%%%%%%%%%%%%%%%%%%%%%%%%%%%%%%%%%%%%%%%%%%%%%%%
%%%%%%%%%%%%%%%%%%%%%%%%%%%%%%%%%%%%%%%%%%%%%%%%%%%%%%%%%%%%%%%%%%%%%%%%%%%%%%%%%%%%%%%%%%%%%%%%%%%%%%%%%%%%%%%%%%%%%%%%%%%%%%
%%%%%%%%%%%%%%%%%%%%%%%%%%%%%%%%%%%%%%%%%%%%%%%%%%%%%%%%%%%%%%%%%%%%%%%%%%%%%%%%%%%%%%%%%%%%%%%%%%%%%%%%%%%%%%%%%%%%%%%%%%%%%%
%%%%%%%%%%%%%%%%%%%%%%%%%%%%%%%%%%%%%%%%%%%%%%%%%%%%%%%%%%%%%%%%%%%%%%%%%%%%%%%%%%%%%%%%%%%%%%%%%%%%%%%%%%%%%%%%%%%%%%%%%%%%%%
%%%%%%%%%%%%%%%%%%%%%%%%%%%%%%%%%%%%%%%%%%%%%%%%%%%%%%%%%%%%%%%%%%%%%%%%%%%%%%%%%%%%%%%%%%%%%%%%%%%%%%%%%%%%%%%%%%%%%%%%%%%%%%
%%%%%%%%%%%%%%%%%%%%%%%%%%%%%%%%%%%%%%%%%%%%%%%%%%%%%%%%%%%%%%%%%%%%%%%%%%%%%%%%%%%%%%%%%%%%%%%%%%%%%%%%%%%%%%%%%%%%%%%%%%%%%%

\section{Some Results}

In this section we show some results concerning dual Ramanujan-Fourier series.
First, we introduce the following Lucht's theorem concerning Ramanujan-Fourier series.

\begin{thm}[Lucht \cite{Lucht}]
\label{th:Lucht}
Let $a:\mathbb{N} \mapsto \mathbb{C}$ be an arithmetic function. If the series 
\begin{equation}
\nonumber
A(n):=n \sum_{k=1}^\infty \mu (k) a(kn) 
\end{equation}
converges for every $n \in \mathbb{N}$, then for $f(n)=(A* \bold{1})(n)$, we have
\begin{equation}
\nonumber
f(n)=\sum_{q=1}^\infty a(q) c_q (n).
\end{equation}
\end{thm}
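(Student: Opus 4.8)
The plan is to substitute the definition of $A$ into $f(n)=(A*\mathbf 1)(n)$ and rearrange the resulting double sum so that the Ramanujan sums emerge via Lemma \ref{lem2-5}. Writing out $f(n)=\sum_{d\mid n}A(d)=\sum_{d\mid n} d\sum_{k=1}^\infty \mu(k)a(kd)$, I would reindex the inner variable by setting $q=kd$, so that for each $d\mid n$ the term becomes $\sum_{q:\,d\mid q} d\,\mu(q/d)\,a(q)$. Interchanging the order of summation (a step that needs the convergence hypothesis to be justified, see below), this gives $f(n)=\sum_{q=1}^\infty a(q)\sum_{d\mid (q,n)} d\,\mu(q/d)$, and the inner sum is exactly $c_q(n)=\sum_{d\mid(q,n)}\mu(q/d)d$ by Ramanujan's formula (equivalently by Lemma \ref{lem2-4}). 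That yields the claimed identity $f(n)=\sum_{q=1}^\infty a(q)c_q(n)$.

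An alternative, more structural route using the convolution calculus of Section 2: by Lemma \ref{lem2-4}, $c_q(n)=(\mu\underset{\ell}{*}D\underset{r}{*}\mathbf 1)(q,n)$, so the target series $\sum_q a(q)c_q(n)$ is a specialization of applying $a$ against $\mu\underset{\ell}{*}D$ and then convolving with $\mathbf 1$ on the right. Meanwhile $A(n)=n\sum_k\mu(k)a(kn)$ can be recognized, after the same reindexing, as the value at $n$ of $a$ paired with $\mu\underset{\ell}{*}D$ in the appropriate sense, and $f=A*\mathbf 1$ supplies precisely the missing $\underset{r}{*}\mathbf 1$. Lemma \ref{lem2-1} then guarantees the two groupings agree. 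I would probably present the first, hands-on version in the body and only remark on this structural interpretation, since the bookkeeping with a one-variable function $a$ slotted into the two-variable framework is slightly delicate to state cleanly.

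The main obstacle is justifying the interchange of the two summations (the finite sum over $d\mid n$ and the infinite sum over $k$, or equivalently over $q$). The hypothesis only gives conditional convergence of $A(n)=n\sum_k\mu(k)a(kn)$ for each $n$, not absolute convergence, so Fubini/Tonelli does not apply directly. Since the outer sum $\sum_{d\mid n}$ is finite, however, the rearrangement is legitimate: $f(n)$ is a finite linear combination $\sum_{d\mid n}A(d)$ of convergent series, and a finite sum of convergent series may be combined term-by-term after reindexing. Concretely, I would fix $N$, consider the partial sum $\sum_{q\le N}a(q)\sum_{d\mid(q,n)}d\mu(q/d)$, split it according to $d\mid n$, and observe that each piece is a partial sum of $d\sum_{k\le N/d}\mu(k)a(kd)$, which converges to $A(d)$ as $N\to\infty$; summing the finitely many limits gives $f(n)$. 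I would also note in passing that convergence of $A(n)$ for all $n$ forces, in particular, $\sum_q a(q)c_q(n)$ to converge, so the right-hand side is well defined.
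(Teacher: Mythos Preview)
The paper does not actually supply its own proof of Theorem~\ref{th:Lucht}; it is quoted from Lucht's survey and used as a model for the dual result. Your argument is correct, and in fact the partial-sum justification you give in the last paragraph is exactly the method the paper employs when it proves the analogous statements it \emph{does} establish (Theorem~\ref{th3-2} and Theorem~\ref{th4-1}): start from $\sum_{q\le N} a(q)c_q(n)$, expand $c_q(n)=\sum_{d\mid (q,n)}\mu(q/d)d$, switch the finite sum over $d\mid n$ outside, reindex $q=kd$ to get $\sum_{d\mid n} d\sum_{k\le N/d}\mu(k)a(kd)$, and let $N\to\infty$ using the convergence hypothesis on each of the finitely many inner sums. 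Your ``structural'' remark via $c=\mu\underset{\ell}{*}D\underset{r}{*}\mathbf 1$ is also in the spirit of the paper's Section~2 machinery, though the paper reserves that language for its own proofs rather than for Lucht's original.
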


Lucht obtained (\ref{eq:tau}) by taking $a(n)=-\frac{\log n}{n}$. In this case, we see that 
\begin{equation}
A(n)=n \sum_{k=1}^\infty \mu (k) a(kn) =- n \sum_{k=1}^\infty \mu (k) \frac{\log k+\log n}{kn}=\bold{1} (n),
\end{equation}
since $\sum_{k=1}^\infty \frac{\mu (k) \log k}{k}=-1$ and $\sum_{k=1}^\infty \frac{\mu (k) }{k}=0$.
Therefore $f=A*\bold{1}=\bold{1} * \bold{1}=\tau $ satisfies (\ref{eq:tau}).
We would like to extend Lucht's theorem to the case of dual Ramanujan-Fourier series.
We show the following theorem which is "dual" to Lucht's theorem.

%%%%%%%%%%%%%%% Theorem 3.2 %%%%%%%%%%%%%%%%%%%%%%%%%%%%5

\begin{thm}
\label{th3-2}
Let $a:\mathbb{N} \mapsto \mathbb{C}$ be an arithmetic function. If the series 
\begin{equation}
\nonumber
A(q):=q \sum_{k=1}^\infty a(kq) 
\end{equation}
converges for every $ q \in \mathbb{N} $, then for $ f(q)=(A* \mu)(q) $, we have
\begin{equation}
\label{eq:th3-2}
f(q)=\sum_{n=1}^\infty a(n) c_q (n).
\end{equation}
\end{thm}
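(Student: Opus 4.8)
The plan is to mimic the structure of Lucht's theorem but with the roles of the "left" and "right" convolutions interchanged, exploiting Lemma~\ref{lem2-4}, which writes $c = \mu \underset{\ell}{*} D \underset{r}{*} \bold{1}$. First I would start from the right-hand side of \eqref{eq:th3-2} and rewrite it using this factorization of $c_q(n)$. Writing the sum over $n$ and expanding $c_q(n) = \sum_{d\mid(q,n)} \mu(q/d)\,d$, I would interchange the order of summation to group terms according to the divisor $d$ of $q$: for each $d\mid q$, the inner sum runs over those $n$ divisible by $d$, i.e.\ $n = d\ell$ with $\ell \geq 1$. This should produce
\[
\sum_{n=1}^\infty a(n) c_q(n) = \sum_{d\mid q} \mu(q/d)\,d \sum_{\ell=1}^\infty a(d\ell) = \sum_{d\mid q} \mu(q/d)\,\frac{A(d)}{1} \cdot \frac{d}{d},
\]
where I use the definition $A(d) = d\sum_{\ell=1}^\infty a(d\ell)$, so that $d\sum_{\ell=1}^\infty a(d\ell) = A(d)$. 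Hence the right-hand side equals $\sum_{d\mid q}\mu(q/d)A(d) = (A*\mu)(q) = f(q)$, which is exactly the claim.

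The key steps in order are: (1) substitute Ramanujan's formula $c_q(n) = \sum_{d\mid(q,n)}\mu(q/d)d$ into $\sum_n a(n)c_q(n)$; (2) justify the interchange of the (finite) sum over $d\mid q$ with the (infinite) sum over $n$; (3) reparametrize the inner sum via $n = d\ell$ and recognize $d\sum_{\ell\geq1}a(d\ell) = A(d)$; (4) identify the resulting divisor sum as the Dirichlet convolution $A*\mu$ evaluated at $q$. Alternatively — and this is perhaps cleaner to present — one can phrase the whole computation in the two-variable convolution language of Section~2: by Lemma~\ref{lem2-4} and Lemma~\ref{lem2-1}, $\sum_n a(n)c_q(n)$ is a "contraction" of $a \underset{r}{*}$ something against $\mu \underset{\ell}{*} D \underset{r}{*} \bold{1}$, and the associativity in Lemma~\ref{lem2-1} lets one collapse $D \underset{r}{*} \bold{1}$ acting on $a$ into the sum defining $A$. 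But the elementary rearrangement above is probably the most transparent.

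The main obstacle is step~(2): the interchange of summation is not an interchange of two \emph{absolutely} convergent sums, since we are only assuming that $A(q) = q\sum_{k}a(kq)$ converges (conditionally, in general) for each $q$. So I cannot invoke Fubini/Tonelli directly. The fix is to argue with partial sums: for $N \geq 1$, consider $\sum_{n=1}^{N} a(n)c_q(n)$ and expand $c_q$; the double sum is now finite, so reordering is legal, giving $\sum_{d\mid q}\mu(q/d)d\sum_{\ell:\,d\ell\leq N}a(d\ell)$. As $N\to\infty$, each of the finitely many inner sums $\sum_{\ell\leq N/d}a(d\ell)$ converges (to $A(d)/d$) by the hypothesis that $A(d)$ exists, and a finite linear combination of convergent sequences converges to the corresponding combination of limits. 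This is the only place where care is needed; everything else is formal manipulation of divisor sums. I would also remark at the end that the hypothesis is genuinely needed only to guarantee convergence of the right-hand side of \eqref{eq:th3-2}, paralleling the role of $A(n)$ in Theorem~\ref{th:Lucht}.
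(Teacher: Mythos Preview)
Your proposal is correct and follows essentially the same route as the paper: both arguments take partial sums $\sum_{n\le x} a(n)c_q(n)$, expand $c_q(n)$ via the divisor-sum formula (the paper writes this as $(\mu \underset{\ell}{*} D \underset{r}{*} \bold{1})(q,n)$ and then unwinds it to $I_{d\mid q}\,\mu(q/d)\,d$, which is exactly your $\sum_{d\mid(q,n)}\mu(q/d)d$), reorder the now-finite double sum to obtain $\sum_{d\mid q}\mu(q/d)\bigl(d\sum_{k\le x/d}a(dk)\bigr)$, and then let $x\to\infty$ using the assumed convergence of each $A(d)$. Your explicit discussion of why the interchange is legitimate (finite outer sum over $d\mid q$, inner partial sums converging by hypothesis) is precisely the justification the paper relies on when it writes ``letting $x\to\infty$''.
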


\begin{proof}
Since $c_q(n)=(\mu \underset{\ell}{*} D \underset{r}{*}  \bold{1} )(q,n)$, we have by Lemma \ref{lem2-2}
\begin{align*}
\sum_{n \leqq x} a(n) c_q(n)  = & \sum_{n \leqq x} a(n) ((\mu \underset{\ell}{*} D) \underset{r}{*}  \bold{1} )(q,n)= \sum_{n \leqq x} a(n) \sum_{d|n} (\mu  \underset{\ell}{*}  D)(q,d) \bold{1} (\frac{n}{d}) \\
= & \sum_{dk \leqq x} a(dk) (\mu  \underset{\ell}{*}  D) (q,d) =\sum_{dk \leqq x} a(dk) I_{d|q} \mu( \frac{q}{d}) d \\
= &  \sum_{d \leqq x} I_{d|q} \mu(\frac{q}{d}) d \sum_{k \leqq x/d} a(dk) =\sum_{d|q} \mu(\frac{q}{d}) (d \sum_{k \leqq x/d} a(dk)) ,
\end{align*}
where $x$ is a sufficiently large real number. Letting $x  \to   \infty$, we have 
\begin{equation}
\nonumber
\sum_{n =1}^\infty a(n) c_q(n) =\sum_{d|q}  \mu(\frac{q}{d})  A(d)=(\mu * A)(q) =f(q),
\end{equation}
which proves Theorem \ref{th3-2}. \qed
\end{proof}

%%%%%%%%%%%%%%%%%%%%%%%%%%% Remark 3.1 %%%%%%%%%%%%%%%%%%%%%%%%%%%%%%%%%%%%%%%%%
\begin{rem}
\label{rem3-01}
It is easy to see that, if we set $a(n)=1/n^s$ \  $(s>1)$, then we obtain (\ref{eq:ra-1}) since
\begin{align*}
A(q)=q \sum_{k=1}^\infty a(kq)=q \sum_{k=1}^\infty \frac{1}{(kq)^s}=\zeta(s) \frac{1}{q^{s-1}}= \zeta(s) \mathrm{id}^{1-s} (q) .\end{align*}
\end{rem}

We show other examples of Theorem \ref{th3-2} below.

%%%%%%%%%%%%%%%%%%%%%%%%%%%%%%% Example 3.1 %%%%%%%%%%%%%%%%%%%%%%%%%%%%%%%%

\begin{exa}
\label{ex3-1}
Let $\omega (q)$ denote the number of distinct prime divisors of $q$ and let $\lambda (q)=(-1)^{\Omega(q)}$ be the Liouville function where $\Omega(q)$ is the number of prime factors of $q$, counted with multiplicity. Then we have
\begin{equation}
\nonumber
 2^{\omega (q)} \lambda (q) = -\frac{1}{\zeta(2)} \sum_{n=1}^\infty \frac{\lambda (n) \log n}{n} c_q (n).  
\end{equation}
\end{exa}
\begin{proof}
Let $a(n)=\lambda (n) \log n /n$. Then, noting that $\lambda$ is completely multiplicative, we have
\begin{align*}
A(q) & =q \sum_{k=1}^\infty a(kq)=q \sum_{k=1}^\infty \frac{\lambda (kq) (\log k + \log q)}{kq} \\
 &=\lambda (q)  \sum_{k=1}^\infty  \frac{\lambda (k) (\log k + \log q)}{k} =-\zeta(2) \lambda(q) ,
\end{align*}
since $\sum_{k=1}^\infty \frac{\lambda (k) \log k}{k}=-\zeta(2)$ and $\sum_{k=1}^\infty \frac{\lambda (k) }{k}=0$.
Therefore, if we set
\begin{equation}
\nonumber
f(q)=(A* \mu )(q)=-\zeta (2) (\lambda * \mu)(q)=-\zeta (2) 2^{\omega (q)} \lambda (q), 
\end{equation}
then Theorem \ref{th3-2} gives the desired result. \qed
\end{proof}

%%%%%%%%%%%%%%%%%%%%%%%%%%%%%%%%% Example 3.2 %%%%%%%%%%%%%%%%%%%%%%%%%%%%%%%%%%

\begin{exa}
\label{ex3-2}
Let $s>1.$ Then we have
\begin{equation}
\label{eq:ex3-2}
\frac{1}{\zeta (s)} \Bigl(\frac{(\mathrm{id} ) \mu}{\varphi_{s}  } * \mu \Bigr) (q)=\sum_{n=1}^\infty \frac{\mu(n)}{n^s} c_q (n).  
\end{equation}
\end{exa}
\begin{proof}
Setting $a(n)=\mu(n) /n^s$, we have
\begin{align*}
A(q) & =q \sum_{k=1}^\infty a(kq)=q \sum_{k=1}^\infty \frac{\mu(kq)}{(kq)^s}=q \sum_{\substack{k\geqq 1 \\ (k,q)=1}} \frac{\mu(k) \mu(q)}{k^s q^s} \\
 &=\frac{q \mu(q)}{q^s} \prod_{ p \nmid q} (1-\frac{1}{p^s})= \frac{q \mu(q)}{q^s} \frac{1}{\zeta(s) \prod_{ p \mid q} (1-1/p^s)} \\
& = \frac{1}{\zeta(s)} \frac{\mathrm{id} (q) \mu(q)}{\varphi_s (q)}.
\end{align*}
Theorefore if we set $f=A*\mu=\frac{1}{\zeta(s)} \frac{(\mathrm{id}) \mu}{\varphi_s } * \mu$, then we see that (\ref{eq:ex3-2}) holds by Theorem \ref{th3-2}. \qed
\end{proof}

Ramanujan \cite{Ramanujan} proved
\begin{equation}
\label{ram3-2}
\sum_{q=1}^\infty \frac{c_q(n)}{q}=0,
\end{equation}
which is well known to be equivalent to the prime number theorem.
We remark that, letting $s \to 1$ in (\ref{eq:ex3-2}), we obtain
\begin{equation}
\label{ram3-3}
\sum_{n=1}^\infty \frac{\mu (n) c_q(n)}{n}=0,
\end{equation}
which is "dual" to (\ref{ram3-2}).

We also have another example which is "dual" to (\ref{ram3-2}).

%%%%%%%%%%%%%%%%%%%%%%%%%%%%%%%%% Example 3.3 %%%%%%%%%%%%%%%%%%%%%%%%%%%%%%%%%%

\begin{exa}
\label{ex3-3}
We have
\begin{equation}
\label{eq:ex3-3}
\sum_{n=1}^\infty \frac{\lambda (n) c_q(n)}{n}=0.
\end{equation}
\end{exa}
\begin{proof}
Taking $a(n)=\lambda(n)/n$ in Theorem \ref{th3-2}, we have
\begin{align*}
A(q)= & q \sum_{k=1}^\infty a(kq)=q \sum_{k=1}^\infty \frac{\lambda (kq) }{kq} = \lambda (q)  \sum_{k=1}^\infty  \frac{\lambda (k) }{k} =0,
\end{align*}
from which $f=A*\mu=0$ follows. \qed
\end{proof}

Next we introduce Delange's theorem concerning Ramanujan-Fourier series.
Given an arithmetic function $a:\mathbb{N} \mapsto \mathbb{C}$, it is convenient to use Theorem \ref{th3-2} in order to find $f$ satisfying (\ref{eq:th3-2}). However, given $f$, it is not convenient to use Theorem \ref{th3-2} in order to find $a$ satisfying (\ref{eq:th3-2}). In the case of Ramanujan-Fourier series, it is sometimes useful to use the following Delange's theorem in order to find $a$ satisfying (\ref{eq1-1}) for given $f$. We will extend Delange's theorem to the case of dual Ramanujan-Fourier series later.

%%%%%%%%%%%%%%%%%%%%%%% Theorem 3.3 %%%%%%%%%%%%%%%%%%%%%%%%%%%%%5

\begin{thm}[Delange \cite{Delange}]
\label{th:Delange}
Let $f(n)$ be an arithmetic function satisfying
\begin{equation}
\label{eq:Del-1}
\sum_{n=1}^\infty 2^{\omega(n)} \frac{|(f * \mu)(n)|}{n} < \infty.
\end{equation}
Then its Ramanujan-Fourier series is pointwise convergent and
\begin{equation}
\nonumber
f(n)=\sum_{q=1}^\infty a(q) c_q (n)
\end{equation}
holds where 
\begin{equation}
\nonumber
a(q)=\sum_{m=1}^\infty \frac{(f * \mu)(qm)}{qm} .  
\end{equation}
Moreover, if $f$ is a multiplicative function, then $a(q)$ can be rewritten as
\begin{equation}
\label{eq:Del-2}
a(q)=\prod_{p \in \mathcal{P}} \Bigl( \sum_{e=\nu_p(q)}^\infty \frac{(f * \mu)(p^e)}{p^e} \Bigr),
\end{equation}
where $\mathcal{P}$ denotes the set of prime numbers and $\nu_p(q)=\left\{ \begin{array}{lll} \alpha & \mathrm{if} & p^\alpha || q \\ 0 & \mathrm{if} & p \nmid q . \end{array}  \right.   $
\end{thm}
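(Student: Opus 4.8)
The plan is to set $g := f * \mu$, so that $f = g * \bold{1}$, and to read the coefficient off as a sum over multiples of $q$. The engine of the whole argument is Kluyver's formula $c_q(n) = \sum_{d \mid (q,n)} \mu(q/d)\,d$ (recorded in the excerpt just before Lemma \ref{lem2-4}), which gives the pointwise bound $|c_q(n)| \le \sum_{d \mid (q,n)} |\mu(q/d)|\,d$ in which only divisors $d$ with $q/d$ \emph{squarefree} contribute. This single observation — that $\mu$ forces the cofactor $q/d$ to be squarefree — is what produces the weight $2^{\omega}$ that appears in Delange's hypothesis, and matching the two is the crux of the proof.

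First I would dispose of well-definedness. Since $a(q) = \sum_{m=1}^{\infty} g(qm)/(qm)$ and $N:=qm$ runs bijectively over the multiples of $q$, we have $\sum_m |g(qm)|/(qm) = \sum_{q \mid N} |g(N)|/N \le \sum_N |g(N)|/N \le \sum_N 2^{\omega(N)}|g(N)|/N < \infty$, so each $a(q)$ converges absolutely. The main step is then to establish absolute convergence of the double family $\{c_q(n)\,g(qm)/(qm)\}_{q,m}$, after which every rearrangement is legitimate. I would estimate
\[
S := \sum_{q=1}^{\infty}\sum_{m=1}^{\infty}\frac{|c_q(n)|\,|g(qm)|}{qm}
\le \sum_{q=1}^{\infty}\sum_{m=1}^{\infty}\Bigl(\sum_{d\mid(q,n)}|\mu(q/d)|\,d\Bigr)\frac{|g(qm)|}{qm},
\]
then write $q = de$ with $d \mid n$ and $e = q/d$ (restricted to squarefree values by $|\mu(e)|$), and finally group by $k = em$. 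Because the number of factorizations $k = em$ with $e$ squarefree equals the number $2^{\omega(k)}$ of squarefree divisors of $k$, this collapses to $S \le \sum_{d \mid n} d \sum_{k=1}^{\infty} 2^{\omega(k)}|g(dk)|/k$; re-indexing by $N = dk$ and using $2^{\omega(N/d)} \le 2^{\omega(N)}$ gives $S \le \bigl(\sum_{d\mid n} d\bigr)\sum_{N=1}^{\infty} 2^{\omega(N)}|g(N)|/N < \infty$ by the hypothesis \eqref{eq:Del-1}. In particular $\sum_q |a(q)|\,|c_q(n)| \le S < \infty$, which is the claimed pointwise (indeed absolute) convergence of the Ramanujan--Fourier series.

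With absolute convergence secured, I would evaluate the sum by regrouping the \emph{same} family according to $N = qm$ and invoking Lemma \ref{lem2-5} (the identity $\sum_{q \mid N} c_q(n) = N\,I_{N \mid n}$):
\[
\sum_{q=1}^{\infty}a(q)\,c_q(n)
=\sum_{q,m}\frac{c_q(n)\,g(qm)}{qm}
=\sum_{N=1}^{\infty}\frac{g(N)}{N}\sum_{q\mid N}c_q(n)
=\sum_{N=1}^{\infty}\frac{g(N)}{N}\,N\,I_{N\mid n}
=\sum_{N\mid n}g(N)=(g*\bold{1})(n)=f(n),
\]
which proves the first two displays. For the multiplicative case I would note that $a(q)=\sum_{q \mid N} g(N)/N$ and that $g=f*\mu$ is multiplicative; since $\sum_N |g(N)|/N$ converges absolutely, the sum over the multiples of $q$ (that is, over all $N$ with $\nu_p(N)\ge \nu_p(q)$ for each prime $p$) factors, summand $g(N)/N = \prod_p g(p^{\nu_p(N)})/p^{\nu_p(N)}$, into the Euler product $\prod_{p\in\mathcal P}\bigl(\sum_{e \ge \nu_p(q)} g(p^e)/p^e\bigr)$, which is exactly \eqref{eq:Del-2}.

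The step I expect to be the genuine obstacle is the absolute-convergence estimate. One must resist the temptation to bound $|c_q(n)|$ by $(q,n)$, which only yields the far weaker (and generally divergent) comparison with $\sum_N |g(N)|$; instead one keeps the squarefree restriction inherited from $\mu(q/d)$ so that the divisor count entering the bound is the \emph{squarefree}-divisor count $2^{\omega}$ rather than the full count $\tau$. Getting exactly $2^{\omega(N)}$ — and hence a clean comparison with the weighted series in \eqref{eq:Del-1} — is the whole point; once it is in place, everything else reduces to Fubini together with Lemma \ref{lem2-5} and a standard Euler-product factorization.
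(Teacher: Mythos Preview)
Your argument is correct. Note, however, that the paper does not actually supply a proof of Theorem~\ref{th:Delange}: it is quoted from Delange, and the only comment is that ``Lucht showed that Theorem~\ref{th:Delange} can easily be obtained from Theorem~\ref{th:Lucht}.'' So the route the paper has in mind is the indirect one --- take $a(q)=\sum_{m}(f*\mu)(qm)/(qm)$, verify that $A(n)=n\sum_{k}\mu(k)a(kn)$ converges and equals $(f*\mu)(n)$, and then read off $f=(A*\bold 1)$ from Theorem~\ref{th:Lucht}. The detailed model is the paper's proof of the dual Theorem~\ref{th3-4}, which does exactly this with Theorem~\ref{th3-2} in place of Theorem~\ref{th:Lucht}.

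Your approach is instead a direct one: you control the full double family $\{c_q(n)g(qm)/(qm)\}$ absolutely via Kluyver's formula, then regroup by $N=qm$ and invoke $\sum_{q\mid N}c_q(n)=N\,I_{N\mid n}$ from Lemma~\ref{lem2-5}. This bypasses Lucht's theorem entirely and makes transparent \emph{why} the weight $2^{\omega}$ is the right one --- it is exactly the count of squarefree divisors forced by the $\mu(q/d)$ in Kluyver's formula, a point the paper never spells out. Conversely, the Lucht route is more modular: once Theorem~\ref{th:Lucht} is in hand, one only needs the (easier) convergence of $A(n)$, and the same template immediately gives the dual and two-variable versions. One small slip: in your intermediate bound the factor $d$ in ``$\sum_{d\mid n} d\sum_{k}2^{\omega(k)}|g(dk)|/k$'' is premature --- the $d$ only appears after re-indexing $N=dk$ (since $1/k=d/N$); your final estimate $S\le\sigma(n)\sum_N 2^{\omega(N)}|g(N)|/N$ is nevertheless correct.
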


Lucht \cite{Lucht} showed that Theorem \ref{th:Delange} can easily be obtained from Theorem \ref{th:Lucht}.
We would like to extend Theorem \ref{th:Delange} to the case of dual Ramanujan-Fourier series by using Theorem \ref{th3-2}. 

%%%%%%%%%%%%%%%%%%%%%%%%%%%%%%%%%%% Theorem 3.4 %%%%%%%%%%%%%%%%%%%%%%%%%%%%%%%%%%%%%5

\begin{thm}
\label{th3-4}
Let $f$ be an arithmetic function satisfying
\begin{equation}
\label{eq1:th3-4}
\sum_{q=1}^\infty  \frac{|(f * \bold{1})(q)|}{q} \tau(q) < \infty.
\end{equation}
Then its dual Ramanujan-Fourier series is pointwise convergent and
\begin{equation}
\nonumber
f(q)=\sum_{n=1}^\infty a(n) c_q (n)
\end{equation}
holds where 
\begin{equation}
\label{eq2:th3-4}
a(n)=\sum_{m=1}^\infty \frac{(f * \bold{1})(nm)}{nm} \mu(m).  
\end{equation}
\end{thm}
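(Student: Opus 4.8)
The plan is to derive Theorem~\ref{th3-4} from Theorem~\ref{th3-2} by identifying the right candidate for the coefficient function $a$ and verifying that it satisfies the hypothesis of Theorem~\ref{th3-2}. First I would set $g = f * \bold{1}$, so that $f = g * \mu$, and I would \emph{define} $a$ by the formula \eqref{eq2:th3-4}, namely $a(n) = \sum_{m=1}^\infty \frac{g(nm)}{nm}\mu(m)$. The first step is to check that this series converges absolutely for every $n$: since $|a(n)| \leq \sum_{m=1}^\infty \frac{|g(nm)|}{nm} \leq \sum_{k \geq 1,\ n \mid k} \frac{|g(k)|}{k} \leq \sum_{k=1}^\infty \frac{|g(k)|}{k}$, and the latter is dominated by $\sum_{k=1}^\infty \frac{|g(k)|}{k}\tau(k) < \infty$ by hypothesis~\eqref{eq1:th3-4}, the definition of $a$ makes sense.

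Next I would compute $A(q) = q\sum_{k=1}^\infty a(kq)$ for this $a$ and show it equals $g(q) = (f*\bold{1})(q)$, so that Theorem~\ref{th3-2} applies and yields $f = A * \mu = g * \mu = f$, which is the desired identity. Plugging in the definition,
\[
A(q) = q \sum_{k=1}^\infty \sum_{m=1}^\infty \frac{g(kqm)}{kqm}\mu(m) = \sum_{k=1}^\infty \sum_{m=1}^\infty \frac{g(kqm)}{km}\mu(m).
\]
Writing $\ell = km$ and collecting terms, the inner double sum over $k,m$ with $km = \ell$ contributes $g(q\ell)\cdot\frac{1}{\ell}\sum_{m \mid \ell}\mu(m) = g(q\ell)\cdot\frac{\delta(\ell)}{\ell}$, so only $\ell = 1$ survives and $A(q) = g(q)$, as wanted. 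The rearrangement of the double series is legitimate precisely because of absolute convergence, which I would justify by the estimate $\sum_{k,m} \frac{|g(kqm)|}{km} \leq \sum_{\ell=1}^\infty \frac{|g(q\ell)|}{\ell}\tau(\ell) \leq \sum_{j=1}^\infty \frac{|g(j)|}{j}\tau(j) < \infty$, using $\tau(\ell) \leq \tau(q\ell)$ and that $j = q\ell$ ranges over multiples of $q$.

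The step I expect to be the main obstacle — or at least the one requiring the most care — is the pointwise convergence of the dual Ramanujan-Fourier series $\sum_{n=1}^\infty a(n)c_q(n)$ itself, as opposed to convergence of the auxiliary series $\sum_k a(kq)$. Theorem~\ref{th3-2} as stated only asserts that if $A(q) = q\sum_k a(kq)$ converges then $\sum_n a(n)c_q(n) = f(q)$; one should check that in our situation the partial sums $\sum_{n \leq x} a(n)c_q(n)$ genuinely converge as $x \to \infty$. Following the computation in the proof of Theorem~\ref{th3-2}, one has $\sum_{n \leq x} a(n)c_q(n) = \sum_{d \mid q}\mu(q/d)\bigl(d\sum_{k \leq x/d} a(dk)\bigr)$, a \emph{finite} sum over $d \mid q$ of quantities each of which converges as $x \to \infty$ (the inner sum tends to $A(d)$), so the whole thing converges to $\sum_{d\mid q}\mu(q/d)A(d) = (\mu * g)(q) = f(q)$. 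Thus the only genuine content beyond Theorem~\ref{th3-2} is the absolute-convergence bookkeeping that makes $a$ well defined and makes the interchange $A(q) = g(q)$ valid, and the observation that the bound $\tau(q)|g(q)|/q$ in \eqref{eq1:th3-4} is exactly strong enough (via $\tau$ counting divisor pairs $(d,\ell)$ with $d\ell \mid$ something) to control all the rearranged sums uniformly.
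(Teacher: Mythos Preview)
Your proof is correct and follows the paper's approach: apply Theorem~\ref{th3-2} to the coefficient function $a$ defined by \eqref{eq2:th3-4}, using hypothesis \eqref{eq1:th3-4} to justify the absolute convergence needed for the rearrangements. You are in fact more explicit than the paper on the key point: you verify directly that $A(q)=(f*\bold{1})(q)$ via the M\"obius collapse $\sum_{m\mid\ell}\mu(m)=\delta(\ell)$, whereas the paper simply writes ``$f(q)=(\mu*A)(q)$'' without separately checking this identity and instead re-derives the conclusion of Theorem~\ref{th3-2} in its convolution-operator notation.
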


%%%%%%%%%%%%%%%%%%%%%%%%%%%%%%%%%%% Remark 3.2 %%%%%%%%%%%%%%%%%%%%%%%%%%%%%%%%

\begin{rem}
\label{rem3-2}
If $f$ is a multiplicative function satisfying
\begin{equation}
\label{eq3:th3-4}
\sum_{p \in \mathcal{P}} \sum_{e=1}^\infty \frac{|(f* \bold{1}) (p^e)|}{p^e} (e+1) < \infty,
\end{equation}
then its dual Ramanujan-Fourier series is pointwise convergent and
\begin{equation}
\nonumber
f(q)=\sum_{n=1}^\infty a(n) c_q (n)
\end{equation}
holds where 
\begin{equation}
\nonumber
a(n)=\sum_{m=1}^\infty \frac{(f * \bold{1})(nm)}{nm} \mu(m).  
\end{equation}
Moreover, $a(n)$ can be rewritten as
\begin{equation}
\label{eq4:th3-4}
a(n)=\prod_{p \in \mathcal{P}} \Bigl(  \frac{(f * \bold{1})(p^{\nu_p(n)})}{p^{\nu_p(n)}}-\frac{(f * \bold{1})(p^{\nu_p(n)+1})}{p^{\nu_p(n)+1}} \Bigr).
\end{equation}
\end{rem}

%%%%%%%%%%%%%%%%%%%%%%%%%%%%%%%%%%% Proof of Theorem 3.4 %%%%%%%%%%%%%%%%%%%%%%%%%%%%%%%%

\begin{proof}[Proof of Theorem \ref{th3-4}.]
We first see that $A(q)=q \sum_{k=1}^\infty a(kq)$ converges since
\begin{align*}
\sum_{k \leqq x } |a(kq)| & \leqq \sum_{k \leqq x } \sum_{m=1}^\infty \frac{|(f * \bold{1})(kqm)|}{kqm} |\mu(m)| \\
 & \leqq \sum_{\ell=1}^\infty  \frac{|(f* \bold{1}) (\ell)|}{\ell} \sum_{k \leqq x, \ k| \ell} 1 \leqq \sum_{\ell=1}^\infty  \frac{|(f* \bold{1}) (\ell)|}{\ell} \tau(\ell) < \infty
\end{align*}
holds for every $x>1$. Using Lemma \ref{lem2-5} we can rewrite $A(q)$ as follows.
\begin{align*}
A(q) &=q \sum_{k=1}^\infty a(kq)= \sum_{m=1}^\infty a(m) q I_{q|m} = \sum_{m=1}^\infty a(m) ( \bold{1} \underset{\ell}{*} c)(q,m).
\end{align*}
From this we have
\begin{align*}
f(q) &=(\mu * A)(q)=\sum_{m=1}^\infty a(m) (\mu \underset{\ell}{*} ( \bold{1} \underset{\ell}{*} c))(q,m)=\sum_{m=1}^\infty a(m) ((\mu * \bold{1}) \underset{\ell}{*} c)(q,m) \\
 &=\sum_{m=1}^\infty a(m) (\delta \underset{\ell}{*} c)(q,m) =\sum_{m=1}^\infty a(m) c_q (m).
\end{align*}
which completes the proof of Theorem \ref{th3-4}. \qed
\end{proof}

%%%%%%%%%%%%%%%%%%%%%%%%%%%%%%%%%%% Proof of Remark 3.2 %%%%%%%%%%%%%%%%%%%%%%%%%%%%%%%%

\begin{proof}[Proof of Remark \ref{rem3-2}.]
If $f$ is a multiplicative function, then $q \mapsto \frac{(f * \bold{1})(q)}{q} \tau(q)$ is also a multiplicative function. Using $1+x \leqq \exp(x)$, we see that (\ref{eq1:th3-4}) follows from (\ref{eq3:th3-4}) since 
\begin{align*}
\sum_{q \leqq Q}  \frac{|(f * \bold{1})(q)|}{q} \tau(q) \leqq & \prod_{p \in \mathcal{P}} \Bigl(1+\sum_{e=1}^\infty \frac{|(f * \bold{1})(p^e)|}{p^e} \tau(p^e) \Bigr) \\
 \leqq & \prod_{p \in \mathcal{P}} \exp \Bigl( \sum_{e=1}^\infty \frac{|(f * \bold{1})(p^e)|}{p^e} \tau(p^e) \Bigr)=\exp \Bigl( \sum_{p \in \mathcal{P}} \sum_{e=1}^\infty \frac{|(f * \bold{1})(p^e)|}{p^e} (e+1) \Bigr) < \infty
\end{align*}
holds for every $Q>1$. Therefore (\ref{eq2:th3-4}) holds by Theorem \ref{th3-4}.  In the expression of (\ref{eq2:th3-4}), we set
 $n=\prod p_j^{e_j} $,  $m=r \prod p_j^{d_j}$  where  $(r,n)=1$,  $e_j \geqq 1$, and $d_j \geqq 0$.
Then we have
\begin{align*}
a(n)= & \sum_{d_j \geqq 0, \ r \geqq 1, \ (r,n)=1} \frac{(f*\bold{1}) (r \prod p_j^{e_j+d_j})}{r \prod p_j^{e_j+d_j}} \mu(r \prod p_j^{d_j}) .
\end{align*}
Since $f*\bold{1}$ is multiplicative and since $\mu( p_j^{d_j}) = 0$ if $d_j \geqq 2$ for some $j$, we obtain
\begin{align*}
 a(n)= & \prod_j \Bigl( \sum_{0 \leqq d_j \leqq 1} \frac{(f*\bold{1}) ( p_j^{e_j+d_j})}{ p_j^{e_j+d_j}} \mu(  p_j^{d_j}) \Bigr) \times \sum_{r \geqq 1, \ (r,n)=1} \frac{(f*\bold{1}) (r )}{r } \mu(r) \\
= & \prod_{p|n} \Bigl(  \frac{(f * \bold{1})(p^{\nu_p(n)})}{p^{\nu_p(n)}}-\frac{(f * \bold{1})(p^{\nu_p(n)+1})}{p^{\nu_p(n)+1}} \Bigr) \times \prod_{ p \nmid n} \Bigl( 1-\frac{(f * \bold{1})(p)}{p} \Bigr) \\
= & \prod_{p \in \mathcal{P}} \Bigl(  \frac{(f * \bold{1})(p^{\nu_p(n)})}{p^{\nu_p(n)}}-\frac{(f * \bold{1})(p^{\nu_p(n)+1})}{p^{\nu_p(n)+1}} \Bigr),
\end{align*}
which completes the proof of Remark \ref{rem3-2}. \qed
\end{proof}

Several examples are shown below.

%%%%%%%%%%%%%%%%%%%%%%%%%%%%%%% Example3.4 %%%%%%%%%%%%%%%%%%%%%%%%%%%%%%%%%%%%

\begin{exa}
\label{ex3-4}
Let $s>1.$ Then we have
\begin{equation}
\nonumber
 \frac{\varphi_s (q)}{q^s} \mu (q)= \frac{1}{\zeta(s+1)} \sum_{n=1}^\infty  \frac{\varphi (K(n))}{n \psi_{s+1} (K(n))} c_q (n) ,
\end{equation}
where $K(n)=\prod_{p|n}p$ and $\psi_s (n)=n^s \prod_{p|n} (1+1/p^s) $.
\end{exa}
\begin{proof}
Let $f(q)=\frac{\varphi_s (q)}{q^s} \mu (q)$. Then it is easy to see that 
\begin{equation}
\nonumber
f(p^e)=\left\{ \begin{array}{cl}  -(1-1/p^s)    & \mathrm{if} \ \ e=1   \\ 0 &  \mathrm{if}  \ \ e \geqq 2  \end{array}  \right.
\end{equation}
and
\begin{equation}
\nonumber
 (f* \bold{1} )(p^e)=\left\{ \begin{array}{cl} 1    & \mathrm{if} \ \ e=0   \\ 1/p^s &  \mathrm{if}  \ \ e \geqq 1,  \end{array}  \right.
\end{equation}
from which we see that (\ref{eq3:th3-4}) holds. It is also easy to see that  
\begin{equation}
\nonumber
 \frac{(f*\bold{1})(p^e)}{p^e}-\frac{(f*\bold{1})(p^{e+1})}{p^{e+1}} =\left\{ \begin{array}{cl} 1-1/p^{s+1}    & \mathrm{if} \ \ e=0  \\ \frac{1}{p^{e+s}} (1-1/p) &  \mathrm{if}  \ \ e \geqq  1. \end{array}  \right.
\end{equation}
From this and (\ref{eq4:th3-4}) we have
\begin{align*}
a(n) & = \prod_{p \mid n} \Bigl(  \frac{(f * \bold{1})(p^{\nu_p(n)})}{p^{\nu_p(n)}}-\frac{(f * \bold{1})(p^{\nu_p(n)+1})}{p^{\nu_p(n)+1}} \Bigr) \prod_{p \nmid n} \Bigl(  \frac{(f * \bold{1})(p^{\nu_p(n)})}{p^{\nu_p(n)}}-\frac{(f * \bold{1})(p^{\nu_p(n)+1})}{p^{\nu_p(n)+1}} \Bigr) \\
& = \prod_{p \mid n} \frac{1}{p^{\nu_p (n)+s}} (1-1/p) \prod_{p \nmid n} (1-1/p^{s+1}) \\
 & = \prod_{p \mid n} \frac{1}{p^{\nu_p (n)+s}} \frac{1-1/p}{1-1/p^{s+1}} \prod_{p \in \mathcal{P}} (1-1/p^{s+1}) \\
 & = \frac{1}{\zeta(s+1)} \prod_{p \mid n} \frac{1}{p^{\nu_p (n)}} \frac{p(1-1/p)}{p^{s+1}(1-1/p^{s+1})}  = \frac{1}{\zeta(s+1)} \frac{\varphi (K(n))}{n \psi_{s+1} (K(n))}.
\end{align*}
\qed
\end{proof}

%%%%%%%%%%%%%%%%%%%%%%%%%%%%%%% Example3.5 %%%%%%%%%%%%%%%%%%%%%%%%%%%%%%%%%%%%

\begin{exa}
\label{ex3-5}
Let $s>1.$ Then we have
\begin{equation}
\nonumber
\frac{\sigma_s (q)}{q^s} \mu (q)=\frac{\zeta(s+1)}{\zeta(2s+2)} \sum_{n=1}^\infty   \frac{(-1)^{\omega(n) } \varphi (K(n))}{n \psi_{s+1}(K(n))}  c_q (n) .
\end{equation}

\end{exa}
\begin{proof}
Let $f(q)=\frac{\sigma_s (q)}{q^s} \mu (q)$. Then it is easy to see that 
\begin{equation}
\nonumber
f(p^e)=\left\{ \begin{array}{cl}   -1-1/p^s  & \mathrm{if} \ \ e=1  \\ 0 &  \mathrm{if}  \ \ e \geqq 2  \end{array}  \right.
\end{equation}
and
\begin{equation}
\nonumber
(f* \bold{1} )(p^e)=\left\{ \begin{array}{cl} 1    & \mathrm{if} \ \  e=0 \\   -1/p^s & \mathrm{if}  \ \ e \geqq 1,  \end{array}  \right.
\end{equation}
from which we see that (\ref{eq3:th3-4}) holds. It is also easy to see that  
\begin{equation}
\nonumber
 \frac{(f*\bold{1})(p^e)}{p^e}-\frac{(f*\bold{1})(p^{e+1})}{p^{e+1}} =\left\{ \begin{array}{cl} 1+1/p^{s+1}  & \mathrm{if} \ \  e=0 \\ \frac{-1}{p^{e+s}} (1-1/p) &  \mathrm{if}  \ \ e \geqq 1.   \end{array}  \right.
\end{equation}
Therefore by (\ref{eq4:th3-4}) we have 
\begin{align*}
a(n) & = \prod_{p \mid n} \Bigl(  \frac{(f * \bold{1})(p^{\nu_p(n)})}{p^{\nu_p(n)}}-\frac{(f * \bold{1})(p^{\nu_p(n)+1})}{p^{\nu_p(n)+1}} \Bigr) \prod_{p \nmid n} \Bigl(  \frac{(f * \bold{1})(p^{\nu_p(n)})}{p^{\nu_p(n)}}-\frac{(f * \bold{1})(p^{\nu_p(n)+1})}{p^{\nu_p(n)+1}} \Bigr) \\
 & = \prod_{p \mid n} \frac{-1}{p^{\nu_p (n)+s}} (1-1/p) \prod_{p \nmid n} (1+1/p^{s+1}) \\
 & = \prod_{p \mid n} \frac{-1}{p^{\nu_p (n)+s}} \frac{1-1/p}{1+1/p^{s+1}} \prod_{p \in \mathcal{P}} (1+1/p^{s+1}) \\
 & =\frac{\zeta(s+1)}{\zeta(2s+2)}   \prod_{p \mid n} \frac{-1}{p^{\nu_p (n)} } \frac{p (1-1/p)}{p^{s+1} (1+1/p^{s+1})} =\frac{\zeta(s+1)}{\zeta(2s+2)} \frac{(-1)^{\omega(n) } \varphi (K(n))}{n \psi_{s+1}(K(n))}  .
\end{align*}
\qed
\end{proof}

%%%%%%%%%%%%%%%%%%%%%%%%%%%%%%% Example3.6 %%%%%%%%%%%%%%%%%%%%%%%%%%%%%%%%%%%%

\begin{exa}
\label{ex3-6}
We have 
\begin{equation}
\nonumber
\lambda(q)=\sum_{n=1}^\infty  a(n)  c_q (n),
\end{equation}
where $a(n)=\displaystyle{\frac{1}{n} \prod_{\substack{p \mid n \\ \nu_p(n):odd}} \frac{-1}{p}}  $ .
\end{exa}
\begin{proof}
Let $f(q)=\lambda (q)$. Then it is easy to see that 
\begin{equation}
\nonumber
(f* \bold{1} )(p^e)=\begin{cases} 0 \ \ \mathrm{if} \ \ e \ \ \mathrm{is \  odd} \\ 1 \ \ \mathrm{if}  \ \ e \ \ \mathrm{is   \ even}, \end{cases} 
\end{equation}
from which we see that (\ref{eq3:th3-4}) holds. It is also easy to see that  
\begin{equation}
\nonumber
 \frac{(f*\bold{1})(p^e)}{p^e}-\frac{(f*\bold{1})(p^{e+1})}{p^{e+1}} =\left\{ \begin{array}{ll}  -1/p^{e+1} & \mathrm{if}  \ \ e \ \mathrm{is \ odd}  \\ 1/p^{e}  &  \mathrm{if}  \ \ e   \ \mathrm{is \ even} .  \end{array}  \right.
\end{equation}
Therefore by (\ref{eq4:th3-4}) we have
\begin{align*}
a(n) & = \prod_{p \mid n} \Bigl(  \frac{(f * \bold{1})(p^{\nu_p(n)})}{p^{\nu_p(n)}}-\frac{(f * \bold{1})(p^{\nu_p(n)+1})}{p^{\nu_p(n)+1}} \Bigr) \prod_{p \nmid n} \Bigl(  \frac{(f * \bold{1})(p^{\nu_p(n)})}{p^{\nu_p(n)}}-\frac{(f * \bold{1})(p^{\nu_p(n)+1})}{p^{\nu_p(n)+1}} \Bigr) \\
 & = \prod_{\substack{p \mid n \\ \nu_p(n):odd}} \frac{-1}{p^{\nu_p (n)+1}} \prod_{\substack{p \mid n \\ \nu_p(n):even}} \frac{1}{p^{\nu_p (n)}} \prod_{p \nmid n} 1 \\
 & = \prod_{p \mid n } \frac{1}{p^{\nu_p (n)}} \prod_{\substack{p \mid n \\ \nu_p(n):odd}} \frac{-1}{p}=\frac{1}{n} \prod_{\substack{p \mid n \\ \nu_p(n):odd}} \frac{-1}{p} .
\end{align*}
\qed
\end{proof}

%%%%%%%%%%%%%%%%%%%%%%%%%%%%%%% Example3.7 %%%%%%%%%%%%%%%%%%%%%%%%%%%%%%%%%%%%

\begin{exa}
\label{ex3-7}
\begin{equation}
\nonumber
\frac{\varphi (q)}{q} \lambda(q)=\frac{1}{\zeta(2)} \sum_{n=1}^\infty   \frac{I_{square}(n)}{n}  c_q (n) ,
\end{equation}
where $I_{square}(n) =\begin{cases}  1 \ \ \mathrm{if}  \ \ n  \ \ \mathrm{is \ a \ perfect \ square} \\ 0  \ \ \mathrm{otherwise} . \end{cases} $
\end{exa}
\begin{proof}
Let $f(q)=\frac{\varphi (q)}{q} \lambda(q)$. Then it is easy to see that 
\begin{equation}
\nonumber
(f* \bold{1} )(p^e)=\left\{ \begin{array}{cl} 1/p    & \mathrm{if} \ \  e \ \mathrm{is \  odd} \\ 1 &  \mathrm{if}  \ \  e \ \mathrm{is   \ even}.  \end{array}  \right. 
\end{equation}
from which we see that (\ref{eq3:th3-4}) holds. It is also easy to see that  
\begin{equation}
\nonumber
 \frac{(f*\bold{1})(p^e)}{p^e}-\frac{(f*\bold{1})(p^{e+1})}{p^{e+1}} =\left\{ \begin{array}{cl}  0   & \mathrm{if} \ \ e   \ \mathrm{is \ odd}  \\ \frac{1}{p^{e}} (1-1/p^2)  &  \mathrm{if}  \ \  e  \ \mathrm{is \ even}.   \end{array}  \right.
\end{equation}
Therefore, by (\ref{eq4:th3-4}), $a(n)=0$ if $n$ is not a perfect square. If $n$ is a perfect square, then we have 
\begin{align*}
a(n)  = \prod_{p \mid n} \frac{1}{p^{\nu_p (n)}} (1-1/p^2) \prod_{p \nmid n} (1-1/p^2)  = \prod_{p \in \mathcal{P}} (1-1/p^2) \prod_{p \mid n} \frac{1}{p^{\nu_p (n)}}     = \frac{1}{\zeta(2)} \frac{1}{n} .
\end{align*}
Thus we can express $a(n)$ as
\begin{equation}
\nonumber
a(n)=\frac{1}{\zeta(2)} \frac{I_{square}(n)}{n}
\end{equation}
whether $n$ is a perfect square or not. This completes the proof of Example \ref{ex3-7}.
\qed
\end{proof}

Let $\mathcal{F}$ be the set of real valued arithmetic functions and let $ \mathcal{A}=\{ a \in \mathcal{F} :  \sum_q a(q) c_q (n) \mathrm{converges.} \} $, $ \mathcal{B}=\{ b \in \mathcal{F} :  \sum_n b(n) c_q (n) \mathrm{converges.} \}. $ 
If we define $T: \mathcal{A} \mapsto \mathcal{F}$ and $T^{*}: \mathcal{B} \mapsto \mathcal{F}$ by
\begin{align*}
& (Ta)(n)=\sum_q a(q) c_q (n) , \\
& (T^{*} b)(q)=\sum_n b(n) c_q (n),
\end{align*}
respectively, then we have "formally"
\begin{align*}
<b,Ta>=<T^{*} b,a>,
\end{align*}
where $<b,a>:=\sum_n b(n) a(n)$ is an inner product of $a$ and $b$. More precisely, we have the following trivial proposition.

%%%%%%%%%%%%%%%%%%%%%%%%%%%%%%% Proposition 3.1  %%%%%%%%%%%%%%%%%%%%%%%%%%%%%%%%%%%%

\begin{pro}
\label{pro3-1}
If $f(n)=\sum_q a(q) c_q(n)$ and $g(q)=\sum_n b(n) c_q(n).$ If 
\begin{equation}
\label{pro3-1}
\sum_{q,n} |a(q) c_q(n) b(n)|<\infty,
\end{equation}
then
\begin{equation}
\nonumber
\sum_n f(n) b(n)=\sum_q a(q) g(q).
\end{equation}
\end{pro}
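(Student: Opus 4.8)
The plan is to read the claimed identity as nothing more than Fubini's theorem for a double series over $\mathbb{N} \times \mathbb{N}$. Substituting the defining Ramanujan--Fourier expansion of $f$ into the left-hand side gives the iterated sum
\[
\sum_n f(n) b(n) = \sum_n \Bigl( \sum_q a(q) c_q(n) \Bigr) b(n) = \sum_n \sum_q a(q) c_q(n) b(n),
\]
and the content of the proposition is exactly that this equals the sum taken in the opposite order,
\[
\sum_q a(q) \Bigl( \sum_n b(n) c_q(n) \Bigr) = \sum_q a(q) g(q).
\]

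First I would invoke the hypothesis $\sum_{q,n} |a(q) c_q(n) b(n)| < \infty$, which asserts that the doubly indexed family $\{ a(q) c_q(n) b(n) \}_{q,n \in \mathbb{N}}$ is absolutely summable; equivalently, it is summable with respect to the counting measure on $\mathbb{N} \times \mathbb{N}$. The standard theorem on absolutely convergent double series (Fubini--Tonelli for counting measure) then guarantees that both iterated sums displayed above converge and are equal to the common value of the unordered double sum.

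It then remains only to match the inner sums with $f(n)$ and $g(q)$. For each $n$ with $b(n) \neq 0$, absolute summability forces $\sum_q |a(q) c_q(n)| < \infty$, so $\sum_q a(q) c_q(n)$ converges, and by the hypothesis this value equals $f(n)$ (when $b(n) = 0$ the corresponding term contributes nothing); similarly $\sum_n b(n) c_q(n) = g(q)$ for every $q$. Equating the two iterated sums yields $\sum_n f(n) b(n) = \sum_q a(q) g(q)$. There is essentially no obstacle here --- as the authors themselves note, the statement is ``trivial'' --- and the only point requiring care is precisely the legitimacy of interchanging the order of summation, which is exactly what the absolute-convergence condition \eqref{pro3-1} is imposed to secure.
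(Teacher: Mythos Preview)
Your proof is correct and follows exactly the same approach as the paper: substitute the series for $f$, invoke absolute summability to interchange the order of summation, and identify the inner sum as $g$. The paper's own proof is the single line $\sum_n f(n) b(n)=\sum_{q,n} a(q) c_q(n) b(n)=\sum_q a(q) g(q)$, so your version is simply a more explicit rendering of the same Fubini argument.
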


\begin{proof}
\begin{equation}
\nonumber
\sum_n f(n) b(n)=\sum_{q,n} a(q) c_q(n) b(n)=\sum_q a(q) g(q).
\end{equation}
\qed
\end{proof}

As an example of the above proposition, we show the following example.

%%%%%%%%%%%%%%%%%%%%%%%%%%%%%%% Example 3.8  %%%%%%%%%%%%%%%%%%%%%%%%%%%%%%%%%%%%

\begin{exa}
\label{ex3-8}
\begin{equation}
\nonumber
 \sum_{n=1}^\infty   \frac{\varphi (n) I_{n:square}}{n^2} = \sum_{q=1}^\infty \frac{\mu(q)^2 }{q \psi(q)} .
\end{equation}
\end{exa}
\begin{proof}
By (\ref{eq:varphi}) with $s=1$ and Example \ref{ex3-7}, we have
\begin{align*}
\frac{\varphi (n)}{n} = & \frac{1}{\zeta(2)} \sum_{q=1}^\infty \frac{\mu (q)}{\varphi_{2} (q)} c_q (n) , \\
\frac{\varphi (q)}{q} \lambda(q)= & \frac{1}{\zeta(2)} \sum_{n=1}^\infty   \frac{I_{square}(n)}{n}  c_q (n) .
\end{align*}
We note that the right hand of (\ref{eq:varphi}) is absolutely convergent. Hence (\ref{pro3-1}) holds.
By Proposition \ref{pro3-1} we have
\begin{align*}
\sum_{n=1}^\infty   \frac{\varphi (n)}{n} \frac{ I_{n:square}}{n} = & \sum_{q=1}^\infty \frac{\mu (q)}{\varphi_{2} (q)} \frac{\varphi (q)}{q} \lambda(q) = \sum_{q=1}^\infty \frac{\mu (q) \lambda(q)  \prod_{p \mid  q} (1-1/p)}{ q^2 \prod_{p \mid  q} (1-1/p^2)} \\
 = & \sum_{q=1}^\infty \frac{\mu (q)^2 }{ q^2 \prod_{p \mid  q} (1+1/p)} = \sum_{q=1}^\infty \frac{\mu(q)^2 }{q \psi(q)} ,
\end{align*}
which completes the proof of Example \ref{ex3-8}. \qed
\end{proof}

Of course, Example \ref{ex3-8} can also be obtained by expressing both sides as infinite products by prime numbers.

%%%%%%%%%%%%%%%%%%%%%%%%%%%%% Remark 3.3 %%%%%%%%%%%%%%%%%%%%%%%%%%

\begin{rem}
We do not know whether we can loosen the condition (\ref{pro3-1}) or not.
If we can, then, for every $f \in T \mathcal{A}$ such that $f=Ta$ and for every $g \in T^* \mathcal{B}$ such that $g=T^* b$, we have "formally"
\begin{align*}
& \sum_n \frac{f(n) \mu(n)}{n}= <Ta,\frac{\mu}{\mathrm{id}}>=  <a,T^* \displaystyle{\frac{\mu}{\mathrm{id}}}>=<a,0>=0 , \\
& \sum_q \frac{g(q) }{q}= <T^* b,\frac{1}{\mathrm{id}}>=  <b,T \frac{1}{\mathrm{id}}>=<b,0>=0 ,
\end{align*}
namely, $\mathrm{Im} T \perp \mathrm{Ker} T^*$ and $\mathrm{Im} T^* \perp \mathrm{Ker} T$.
However, we can't prove the above rigorously.
\end{rem}

Next we consider Dirichlet series of a function expressed as Ramanujan-Fourier series or dual Ramanujan-Fourier series. We show the following theorem.

%%%%%%%%%%%%%%%%%%%%%%%%%%%%%%%%%%%%%%%%%%%%%%%% Theorem 3.5 %%%%%%%%%%%%%%%%%%%%%

\begin{thm}
\label{th3-5}
Suppose $s>1$. Let $f$ be an arithmetic function such that the Dirichlet series $\sum_{n=1}^\infty \frac{f(n)}{n^s}$ converges absolutely and let $a( \cdot )$ be a multiplicative function such that $\sum_{ k,n \geqq 1} \frac{|a(kn)|}{n^{s-1}} < \infty $. \\
{\rm{(i)}} \ If $f(n)=\sum_{q=1}^\infty a(q) c_q(n)$ converges absolutely, then the Dirichlet series of $f$ is expressed as
\begin{equation}
\nonumber
\sum_{n=1}^\infty \frac{f(n)}{n^s}= \zeta(s) \prod_{ p \in \mathcal{P}} ( \sum_{e \geqq 0} \frac{a(p^e)-a(p^{e+1})}{p^{e(s-1)}}).
\end{equation}
{\rm{(ii)}} \ If $f(q)=\sum_{n=1}^\infty a(n) c_q(n)$ converges absolutely , then the Dirichlet series of $f$ is expressed as
\begin{equation}
\nonumber
\sum_{q=1}^\infty \frac{f(q)}{q^s}= \frac{1}{\zeta(s)} \prod_{ p \in \mathcal{P}} ( \sum_{e_1 \geqq 0} \frac{\sum_{e_2 \geqq e_1} a(p^{e_2})}{p^{e_1 (s-1)}}).
\end{equation}
\end{thm}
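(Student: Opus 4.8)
The plan is to prove both parts by the same three-step scheme: (1) interchange the order of summation in the double sum over $q$ and $n$; (2) substitute the \emph{known} closed form of the ``other'' Ramanujan--Fourier sum --- the sum over $n$ for part (i), the sum over $q$ for part (ii) --- taken respectively from (\ref{eq:ra-1}) and (\ref{eq:sigma}); and (3) factor the resulting sum of a multiplicative function into an Euler product and simplify each local factor by an elementary rearrangement. The hypothesis $\sum_{k,n\geq1}|a(kn)|/n^{s-1}<\infty$ is precisely what makes all of these interchanges and factorizations legitimate, via the estimate $|c_q(n)|\leq(q,n)$.

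For (i) I would first check $\sum_{q,n}|a(q)||c_q(n)|/n^s<\infty$. Using $|c_q(n)|\leq(q,n)$ and, for fixed $q$, grouping the $n$ by $d=(q,n)$ and writing $n=dk$ with $(k,q/d)=1$, one gets $\sum_{n\geq1}(q,n)/n^s\leq\zeta(s)\sum_{d\mid q}d^{1-s}$, hence $\sum_{q,n}|a(q)||c_q(n)|/n^s\leq\zeta(s)\sum_q|a(q)|\sum_{d\mid q}d^{1-s}=\zeta(s)\sum_{k,n\geq1}|a(kn)|/n^{s-1}<\infty$ (re-summing the last double sum via $q=dk$ and relabeling $d\mapsto n$). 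Fubini then gives $\sum_n f(n)/n^s=\sum_q a(q)\sum_n c_q(n)/n^s$, and by (\ref{eq:ra-1}), $\sum_n c_q(n)/n^s=\zeta(s)\varphi_{1-s}(q)$, so $\sum_n f(n)/n^s=\zeta(s)\sum_q a(q)\varphi_{1-s}(q)$. Since $\varphi_{1-s}=\mathrm{id}^{1-s}*\mu$ we have $|\varphi_{1-s}(q)|\leq\sum_{d\mid q}d^{1-s}$, so $q\mapsto a(q)\varphi_{1-s}(q)$ is multiplicative and absolutely summable and factors as $\prod_p\bigl(\sum_{e\geq0}a(p^e)\varphi_{1-s}(p^e)\bigr)$. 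Finally, inserting $\varphi_{1-s}(1)=1$ and $\varphi_{1-s}(p^e)=p^{-e(s-1)}-p^{-(e-1)(s-1)}$ for $e\geq1$, and rewriting $a(p^e)-a(p^{e+1})$ with a shift of the summation index, each local factor collapses to $\sum_{e\geq0}\bigl(a(p^e)-a(p^{e+1})\bigr)/p^{e(s-1)}$, which is the asserted identity.

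For (ii) I would run the mirror-image argument with $q$ and $n$ exchanged. The bound $\sum_{q\geq1}(q,n)/q^s\leq\zeta(s)\sum_{d\mid n}d^{1-s}$ gives $\sum_{q,n}|a(n)||c_q(n)|/q^s\leq\zeta(s)\sum_{k,n\geq1}|a(kn)|/n^{s-1}<\infty$, so $\sum_q f(q)/q^s=\sum_n a(n)\sum_q c_q(n)/q^s$. Applying (\ref{eq:sigma}) with $s$ replaced by $s-1$ (valid since $s>1$) yields $\sum_q c_q(n)/q^s=\sigma_{s-1}(n)/(n^{s-1}\zeta(s))=\zeta(s)^{-1}\sum_{d\mid n}d^{1-s}$. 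Hence $\sum_q f(q)/q^s=\zeta(s)^{-1}\sum_n a(n)\sum_{d\mid n}d^{1-s}$; the map $n\mapsto a(n)\sum_{d\mid n}d^{1-s}$ is multiplicative and absolutely summable, so it factors as $\prod_p\bigl(\sum_{e\geq0}a(p^e)\sum_{0\leq j\leq e}p^{-j(s-1)}\bigr)$, and swapping the order of summation in each local factor ($\sum_{e\geq0}\sum_{j\leq e}=\sum_{j\geq0}\sum_{e\geq j}$) turns it into $\prod_p\bigl(\sum_{e_1\geq0}p^{-e_1(s-1)}\sum_{e_2\geq e_1}a(p^{e_2})\bigr)$, as asserted.

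The individual computations are short, so the one genuine obstacle is the convergence bookkeeping: every interchange above (the Fubini step, the passage to each Euler product, and the rearrangements inside each local factor) must be backed by absolute convergence, and the decisive estimate is the chain $\sum_{q,n}|a(q)||c_q(n)|/n^s\leq\zeta(s)\sum_{k,n}|a(kn)|/n^{s-1}$ (and its analogue for (ii)) coming from $|c_q(n)|\leq(q,n)$ together with $\sum_n(q,n)/n^s\leq\zeta(s)\sum_{d\mid q}d^{1-s}$. Once this is established, absolute convergence of $\sum_q|a(q)\varphi_{1-s}(q)|$, of $\sum_n|a(n)|\sum_{d\mid n}d^{1-s}$, and of every local series follows at once, and the rest is routine manipulation of geometric-type series.
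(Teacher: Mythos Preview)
Your argument is correct. The route differs from the paper's in packaging rather than in substance. You interchange the sums over $q$ and $n$ directly and then plug in the known closed forms $\sum_n c_q(n)/n^s=\zeta(s)\,\varphi_{1-s}(q)$ from (\ref{eq:ra-1}) and $\sum_q c_q(n)/q^s=\zeta(s)^{-1}\sigma_{s-1}(n)/n^{s-1}$ from (\ref{eq:sigma}). The paper instead first passes to $f*\mu$ in part (i) (resp.\ $\bold{1}*f$ in part (ii)), evaluates these via the identity $c=\mu\underset{\ell}{*}D\underset{r}{*}\bold{1}$ and Lemma~\ref{lem2-2}, and only then forms the Dirichlet series; the factor $\zeta(s)^{\pm1}$ enters at the end through $\sum_n f(n)/n^s=\zeta(s)\sum_n(f*\mu)(n)/n^s$ and its dual. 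Both paths land on exactly the same multiplicative summand $a(q)\,(\mathrm{id}^{1-s}*\mu)(q)$ (resp.\ $a(n)\,(\mathrm{id}^{1-s}*\bold{1})(n)$), and the Euler-factor rearrangement from that point on is identical. Your version is shorter and makes explicit that the theorem is really a corollary of Ramanujan's two classical identities; the paper's version stays self-contained within its convolution calculus and never quotes (\ref{eq:ra-1}) or (\ref{eq:sigma}) as black boxes. Your convergence bookkeeping via $|c_q(n)|\le (q,n)$ and the reduction of the Fubini hypothesis to $\sum_{k,n}|a(kn)|/n^{s-1}<\infty$ is also more explicit than what the paper writes down.
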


\begin{proof}
{\rm{(i)}} 
Since $f(n)=\sum_{q=1}^\infty a(q) c_q(n)= \sum_{q=1}^\infty a(q)  (\mu \underset{\ell}{*} D \underset{r}{*} \bold{1} )(q,n) $, we have by Lemma \ref{lem2-2}
\begin{align*}
(f*\mu )(n)= & \sum_{q=1}^\infty a(q)  ((\mu \underset{\ell}{*} D \underset{r}{*} \bold{1} ) \underset{r}{*} \mu) (q,n) = \sum_{q=1}^\infty a(q)  (\mu \underset{\ell}{*} D) (q,n) \\
 = & \sum_{q=1}^\infty a(q) I_{n \mid q} \mu (\frac{q}{n}) n =\sum_{\substack{q \geqq 1 \\ n \mid q}} a(q)  \mu (\frac{q}{n}) n.
\end{align*}
From this we have
\begin{align*}
\sum_{n=1}^\infty \frac{(f*\mu)(n)}{n^s}= & \sum_{n=1}^\infty \frac{1}{n^s} \sum_{\substack{q \geqq 1 \\ n \mid q}} a(q)  \mu (\frac{q}{n}) n=\sum_{q \geqq 1} a(q) \sum_{n \mid q} \frac{ \mu(\frac{q}{n})  }{n^{s-1} }  = \sum_{q=1}^\infty a(q) (\frac{1}{\mathrm{id}^{s-1}} * \mu)(q) \\
=& \prod_{ p \in \mathcal{P}} ( \sum_{e \geqq 0} a(p^e) (\frac{1}{\mathrm{id}^{s-1}} * \mu)(p^e) ) = \prod_{ p \in \mathcal{P}} (1+ \sum_{e \geqq 1} a(p^e) ( \frac{1}{p^{e(s-1)}} - \frac{1}{p^{(e-1)(s-1)}} )) \\
=&   \prod_{ p \in \mathcal{P}} (1-a(p)+\sum_{e \geqq 1} \frac{a(p^e)-a(p^{e+1})}{p^{e(s-1)}}) =  \prod_{ p \in \mathcal{P}} ( \sum_{e \geqq 0} \frac{a(p^e)-a(p^{e+1})}{p^{e(s-1)}}).
\end{align*}
Therefore we have 
\begin{align*}
\frac{1}{\zeta(s)} \sum_{n=1}^\infty \frac{f(n)}{n^s} =\prod_{ p \in \mathcal{P}} ( \sum_{e \geqq 0} \frac{a(p^e)-a(p^{e+1})}{p^{e(s-1)}}).
\end{align*}

{\rm{(ii)}} 
We proceed in a similar manner. 
Since $f(q)=\sum_{n=1}^\infty a(n) c_q(n)= \sum_{n=1}^\infty a(n)  (\mu \underset{\ell}{*} D \underset{r}{*} \bold{1} )(q,n) $, we have by Lemma \ref{lem2-2}
\begin{align*}
(\bold{1} * f )(q)= & \sum_{n=1}^\infty a(n)  (\bold{1} \underset{\ell}{*} (\mu \underset{\ell}{*} D \underset{r}{*} \bold{1} )) (q,n) = \sum_{n=1}^\infty a(n)  (D \underset{r}{*} \bold{1}) (q,n) \\
 = & \sum_{n=1}^\infty a(n) I_{q \mid n} q =\sum_{\substack{n \geqq 1 \\ q \mid n}} a(n)  q.
\end{align*}
From this we have
\begin{align*}
\sum_{q=1}^\infty \frac{(\bold{1} * f)(q)}{q^s}= & \sum_{q=1}^\infty \frac{1}{q^s} \sum_{\substack{n \geqq 1 \\ q \mid n}} a(n) q=\sum_{n \geqq 1} a(n) \sum_{q \mid n} \frac{ 1  }{q^{s-1} }  \\
= & \sum_{n=1}^\infty a(n) (\frac{1}{\mathrm{id}^{s-1}} * \bold{1})(n) = \prod_{ p \in \mathcal{P}} ( \sum_{e \geqq 0} a(p^e) (\frac{1}{\mathrm{id}^{s-1}} * \bold{1})(p^e) ) \\
 =& \prod_{ p \in \mathcal{P}} ( \sum_{e \geqq 0} a(p^e) (\mathrm{id}^{1-s} * \bold{1})(p^e) ) =  \prod_{ p \in \mathcal{P}} (\sum_{e_2  \geqq 0} a(p^{e_2}) \sum_{0 \leqq e_1  \leqq e_2} \frac{1}{p^{e_1 (s-1)}} ) \\
=&  \prod_{ p \in \mathcal{P}} ( \sum_{e_1 \geqq 0} \frac{\sum_{e_2 \geqq e_1} a(p^{e_2})}{p^{e_1 (s-1)}}).
\end{align*}
Therefore we have 
\begin{align*}
\zeta(s) \sum_{q=1}^\infty \frac{f(q)}{q^s} =\prod_{ p \in \mathcal{P}} ( \sum_{e_1 \geqq 0} \frac{\sum_{e_2 \geqq e_1} a(p^{e_2})}{p^{e_1 (s-1)}}),
\end{align*}
which completes the proof of Theorem \ref{th3-5}. \qed
\end{proof}

As an example of Theorem \ref{th3-5}, we show the following example.

%%%%%%%%%%%%%%%%%%%%%%%%%%%%% Example 3.9 %%%%%%%%%%%%%%%%%%%%%%%%%%

\begin{exa}
\begin{equation}
\nonumber
\frac{\lambda(q) K(q) \psi (q)}{q^2}=\frac{\zeta(2)}{\zeta(4)} \sum_{n=1}^\infty \frac{\lambda(n)}{n^2} c_q (n).
\end{equation}
\begin{proof}
Let $a(n)=\lambda(n)/n^2.$ Then we have 
\begin{align*}
& \sum_{e_2 \geqq e_1} a(p^{e_2})= \frac{(-1)^{e_1}}{p^{2e_1}}+\frac{(-1)^{e_1+1}}{p^{2(e_1+1)}}+ \cdots    =\frac{(-1)^{e_1} }{p^{2 e_1} (1+1/p^2)}, \quad \mathrm{and} \\
& \sum_{e_1 \geqq 0} \frac{\sum_{e_2 \geqq e_1} a(p^{e_2})}{p^{e_1 (s-1)}}= \frac{1}{1+1/p^2}  \sum_{e \geqq 0} \frac{(-1)^e }{p^{e(s+1)}} = \frac{1}{1+1/p^2} \frac{1}{1+1/p^{s+1}} .
\end{align*}
From this we have
\begin{align*}
\frac{1}{\zeta(s)} \prod_{ p \in \mathcal{P}} ( \sum_{e_1 \geqq 0} \frac{\sum_{e_2 \geqq e_1} a(p^{e_2})}{p^{e_1 (s-1)}})= & \frac{1}{\zeta(s)} \prod_{ p \in \mathcal{P}} \frac{1}{1+1/p^2} \frac{1}{1+1/p^{s+1}} = \frac{1}{\zeta(s)} \frac{\zeta(4)}{\zeta(2)} \frac{\zeta(2s+2)}{\zeta(s+1)}.
\end{align*}
Therefore, if we set $f(q)=\sum_{n=1}^\infty a(n) c_q(n)$, then $f$ satisfies
\begin{equation}
\nonumber
\sum_{q=1}^\infty \frac{f(q)}{q^s} =\frac{1}{\zeta(s)} \frac{\zeta(4)}{\zeta(2)} \frac{\zeta(2s+2)}{\zeta(s+1)}.
\end{equation}
On the other hand, if we set $\widetilde{f}(q)= \frac{\lambda(q) K(q) \psi (q)}{q^2}$, then $\widetilde{f}$ satisfies
\begin{align*}
\sum_{q=1}^\infty \frac{\widetilde{f} (q)}{q^s} = & \prod_{p \in \mathcal{P}} (1+\sum_{e \geqq 1} \frac{1}{p^{es}} \frac{\lambda(p^e) K(p^e) \psi (p^e)}{p^{2e}}) =  \prod_{p \in \mathcal{P}} (1+\sum_{e \geqq 1} \frac{1}{p^{es}}  \frac{(-1)^e p \cdot p^e(1+1/p)}{p^{2e}}) \\
=& \prod_{p \in \mathcal{P}} (1+(p+1) \sum_{e \geqq 1} (\frac{-1}{p^{e(s+1)}} ))=\prod_{p \in \mathcal{P}} (1-\frac{p+1}{p^{s+1}+1}) \\
= & \prod_{p \in \mathcal{P}} \frac{1-1/p^s}{1+1/p^{s+1}}=   \frac{\zeta(2s+2)}{\zeta(s) \zeta(s+1)}.
\end{align*}
By the uniqueness of the Dirichlet series, we have $\widetilde{f}(q)=\frac{\zeta(2)}{\zeta(4)} f(q) $, namely
\begin{align*}
\frac{\lambda(q) K(q) \psi (q)}{q^2}= \frac{\zeta(2)}{\zeta(4)}  \sum_{n=1}^\infty \frac{\lambda(n)}{n^2} c_q (n).
\end{align*}
\qed
\end{proof}

\end{exa}

%%%%%%%%%%%%%%%%%%%%%%%%%%% section 4 %%%%%%%%%%%%%%% The Case of Arithmetic Functions of Two Variables %%%%%%%%%%
%%%%%%%%%%%%%%%%%%%%%%%%%%%%%%%%%%%%%%%%%%%%%%%%%%%%%%%%%%%%%%%%%%%%%%%%%%%%%%%%%%%%%%%%%%%%%%%%%%%%%%%%%%%%%%%%%%%%%%%%
%%%%%%%%%%%%%%%%%%%%%%%%%%%%%%%%%%%%%%%%%%%%%%%%%%%%%%%%%%%%%%%%%%%%%%%%%%%%%%%%%%%%%%%%%%%%%%%%%%%%%%%%%%%%%%%%%%%%%%%%
%%%%%%%%%%%%%%%%%%%%%%%%%%%%%%%%%%%%%%%%%%%%%%%%%%%%%%%%%%%%%%%%%%%%%%%%%%%%%%%%%%%%%%%%%%%%%%%%%%%%%%%%%%%%%%%%%%%%%%%%

\section{The Case of Arithmetic Functions of Two Variables}

In this section, we consider the case of arithmetic functions of two variables. We would like to extend theorems in section 3 to this case.
In more detail, we consider Ramanujan-Fourier series
\begin{align*}
& f(n_1,n_2)=  \sum_{q_1,q_2=1}^\infty a(q_1,q_2) c_{q_1} (n_1) c_{q_2} (n_2), 
\end{align*}
and dual Ramanujan-Fourier series
\begin{align*}
f(q_1,q_2)= & \sum_{n_1,n_2=1}^\infty a(n_1,n_2) c_{q_1} (n_1) c_{q_2} (n_2),
\end{align*}
where $f,a$ are arithmetic functions of two variables. 

We use the same notations $\bold{1}$ and $\mu$ for the functions 
\begin{align*}
& \bold{1} (n_1,n_2)=\bold{1} (n_1) \bold{1} (n_2), \\
& \mu(n_1, n_2)=\mu(n_1)  \mu(n_2),
\end{align*}
respectively. Clearly, $(\mu * \bold{1}) (n_1, n_2)= \delta(n_1) \delta( n_2)  $ holds. 

We begin with the following theorem which is an extension of Theorem \ref{th:Lucht}.

%%%%%%%%%%%%%%%%%%%%%%%%%%%%%%%%% Theorem 4.1 %%%%%%%%%%%%%%%%%%%%%%%%%%%%%%%%%%%%%%%%%55

\begin{thm}
\label{th4-1}
Let $a:\mathbb{N} \times \mathbb{N} \mapsto \mathbb{C} $ be an arithmetic function of two variables. If the series 
\begin{equation}
\nonumber
A(n_1,n_2):=n_1 n_2 \sum_{k_1,k_2=1}^\infty \mu (k_1, k_2) a(k_1 n_1,k_2 n_2) 
\end{equation}
converges for every $n_1,n_2 \in \mathbb{N}$, then for $f(n_1,n_2)=(A* \bold{1})(n_1,n_2)$, we have
\begin{equation}
\nonumber
f(n_1,n_2)=\sum_{q_1,q_2=1}^\infty a(q_1,q_2) c_{q_1} (n_1) c_{q_2} (n_2).
\end{equation}
\end{thm}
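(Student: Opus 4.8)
The plan is to reduce Theorem~\ref{th4-1} to the one-variable Lucht theorem (Theorem~\ref{th:Lucht}) by exploiting the tensor-product structure of the objects involved, or, if one prefers a self-contained argument, to mimic the proof of Theorem~\ref{th3-2} with the two-variable convolution machinery of Section~2 extended coordinatewise. First I would record that the Ramanujan sum factors as $c_{q_1}(n_1)c_{q_2}(n_2) = (\mu \underset{\ell}{*} D \underset{r}{*} \bold{1})(q_1,n_1) \cdot (\mu \underset{\ell}{*} D \underset{r}{*} \bold{1})(q_2,n_2)$, i.e. it is the ``tensor square'' of the kernel in Lemma~\ref{lem2-4}; equivalently, writing $\mathbf{D}((q_1,q_2),(n_1,n_2)) = D(q_1,n_1)D(q_2,n_2)$ (a multiplicative function of the pair of two-variable arguments), we have $c_{q_1}(n_1)c_{q_2}(n_2) = (\mu \underset{\ell}{*} \mathbf{D} \underset{r}{*} \bold{1})$ with the convolutions now taken in the two-variable sense defined in Section~2 (using $\mu(n_1,n_2)=\mu(n_1)\mu(n_2)$ and $\bold{1}(n_1,n_2)=\bold{1}(n_1)\bold{1}(n_2)$).

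Next I would carry out the partial-summation / rearrangement step exactly as in the proof of Theorem~\ref{th3-2}, but in two indices. Summing $\sum_{n_1 \leqq x_1,\, n_2 \leqq x_2} a(n_1,n_2) c_{q_1}(n_1)c_{q_2}(n_2)$ and using the two-variable analogue of Lemma~\ref{lem2-2} to strip off the outer $\underset{r}{*}\bold{1}$, one writes $n_i = d_i k_i$ with $d_i \mid q_i$, obtaining
\begin{align*}
\sum_{n_1 \leqq x_1,\, n_2 \leqq x_2} a(n_1,n_2) c_{q_1}(n_1)c_{q_2}(n_2)
= \sum_{d_1 \mid q_1,\, d_2 \mid q_2} \mu\!\left(\tfrac{q_1}{d_1}\right)\mu\!\left(\tfrac{q_2}{d_2}\right) d_1 d_2 \!\!\sum_{k_1 \leqq x_1/d_1,\, k_2 \leqq x_2/d_2}\!\! a(d_1 k_1, d_2 k_2).
\end{align*}
Letting $x_1, x_2 \to \infty$ and invoking the hypothesis that $A(d_1,d_2) = d_1 d_2 \sum_{k_1,k_2} \mu(k_1,k_2) a(k_1 d_1, k_2 d_2)$ converges, the inner double sum (which after inserting $|\mu|$-free sign bookkeeping is exactly $A(d_1,d_2)/(d_1 d_2)$ once the $\mu$'s are folded back in — here one must be a little careful: in Theorem~\ref{th3-2} there was no $\mu$ inside, whereas here the $\mu(k_1,k_2)$ in $A$ matches the $c$-kernel which already carries a $\mu$, so the correct statement is that the limit of $\sum_{k_1,k_2} \mu(k_1,k_2) a(d_1 k_1, d_2 k_2)$ is $A(d_1,d_2)/(d_1 d_2)$) collapses and the whole expression becomes $\sum_{d_1 \mid q_1,\, d_2 \mid q_2} \mu(q_1/d_1)\mu(q_2/d_2) A(d_1,d_2) = (\mu * A)(q_1,q_2)$ in the two-variable Dirichlet convolution. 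Since $(\mu * \bold{1}) = \delta \otimes \delta$ on pairs, and $f = A * \bold{1}$, one has $\mu * A = \mu * (\mu * A * \bold{1})$\,--- no, more directly: $f = A * \bold{1}$ gives $A = f * \mu$ would be circular; instead observe the identity we want is simply ``$\sum a(n_1,n_2)c_{q_1}c_{q_2} = (\mu * A)(q_1,q_2)$ and $(\mu * A) = (\mu * \bold{1} * A')$'' — cleaner is to note $f := A * \bold{1}$ and then check $\mu * A$ need not equal $f$; rather, this is the \emph{Lucht} (primal) direction, so the correct bookkeeping mirrors Theorem~\ref{th:Lucht}: one shows $\sum_{q_1,q_2} a(q_1,q_2)c_{q_1}(n_1)c_{q_2}(n_2)$, grouped by $d_i = (q_i,n_i)$, telescopes against the Möbius sum in $A$ to give $(A * \bold{1})(n_1,n_2) = f(n_1,n_2)$.

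Let me restate the organization cleanly: I would (1) re-derive the one-variable Lucht identity's proof in the tensored setting, namely rearrange $\sum_{q_1,q_2} a(q_1,q_2)c_{q_1}(n_1)c_{q_2}(n_2) = \sum_{q_1,q_2} a(q_1,q_2) \sum_{d_1\mid(q_1,n_1),d_2\mid(q_2,n_2)}\mu(q_1/d_1)\mu(q_2/d_2)d_1 d_2$; (2) substitute $q_i = d_i k_i$ and interchange the (absolutely/conditionally convergent, by hypothesis on $A$) sums to get $\sum_{d_1\mid n_1, d_2\mid n_2} \big(d_1 d_2 \sum_{k_1,k_2}\mu(k_1,k_2)a(d_1k_1,d_2k_2)\big) = \sum_{d_1\mid n_1,d_2\mid n_2} A(d_1,d_2) = (A*\bold{1})(n_1,n_2) = f(n_1,n_2)$. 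Each interchange is justified verbatim as in Lucht's original one-variable argument applied to each coordinate in turn (or to the product), so no new analytic input is needed. \textbf{The main obstacle} is purely a matter of care rather than depth: one must justify the rearrangement of the double series — the hypothesis only guarantees convergence of $A(n_1,n_2)$ for each pair, which is a \emph{conditional} statement, so the interchange of the $\sum_{q_1,q_2}$ with the $\sum_{d_1,d_2}$ must be done by first fixing a finite truncation $q_i \leqq x_i$ (as in the proof of Theorem~\ref{th3-2}, where the $x$-truncation is exactly what makes the manipulations legitimate before passing to the limit), performing the finite rearrangement, and only then letting $x_1,x_2\to\infty$; the subtlety is that Lucht's hypothesis is about the iterated/Möbius-weighted sum rather than absolute convergence, and one should confirm that the same truncation argument that works for a single index also works when both indices are truncated independently, which it does because the finite double sum over $q_1\leqq x_1, q_2\leqq x_2$ factors through the two one-dimensional truncations. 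The remaining verifications (the factorization of the $c$-kernel, the two-variable Lemma~\ref{lem2-2} analogue, and the telescoping of the Möbius sums) are routine given Section~2.
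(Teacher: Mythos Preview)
Your final clean restatement (the paragraph beginning ``Let me restate the organization cleanly'') is correct and is exactly the paper's proof: truncate $\sum_{q_1\leqq x,\,q_2\leqq y} a(q_1,q_2)c_{q_1}(n_1)c_{q_2}(n_2)$, expand each $c_{q_i}(n_i)=\sum_{d_i\mid(q_i,n_i)}\mu(q_i/d_i)d_i$, substitute $q_i=d_ik_i$, use $(D\underset{r}{*}\bold{1})(d_i,n_i)=I_{d_i\mid n_i}d_i$, and let $x,y\to\infty$ so that the inner sum becomes $A(d_1,d_2)$ and the outer sum becomes $(A*\bold{1})(n_1,n_2)=f(n_1,n_2)$.

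The preceding paragraph, however, is carrying out the computation for the \emph{dual} theorem (Theorem~\ref{th4-2}), not Theorem~\ref{th4-1}: you truncate and sum over $n_1,n_2$ with $q_1,q_2$ fixed, arrive at $(\mu*A)(q_1,q_2)$, and then visibly cannot reconcile this with $f=A*\bold{1}$ --- because it cannot be reconciled; that identity is $f=\mu*A$ with the Theorem~\ref{th4-2} definition of $A$ (no M\"obius inside), not the present one. The confusion about ``where the $\mu$ goes'' stems entirely from having started the wrong calculation. Simply delete that paragraph; the clean restatement that follows it is already the complete and correct argument, and your remarks on the truncation justifying the interchange are exactly what the paper does.
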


\begin{proof} 
Since $c_q(n)=(\mu \underset{\ell}{*} D \underset{r}{*}  \bold{1} )(q,n)$, we have
\begin{align*}
\sum_{\substack{q_1 \leqq x \\ q_2 \leqq y}} a(q_1,q_2) c_{q_1} (n_1) c_{q_2} (n_2)  = & \sum_{\substack{q_1 \leqq x \\ q_2 \leqq y}} a(q_1,q_2) (\mu \underset{\ell}{*} D \underset{r}{*}  \bold{1} )(q_1,n_1) (\mu \underset{\ell}{*} D \underset{r}{*}  \bold{1} )(q_2,n_2) \\
=& \sum_{\substack{q_1 \leqq x \\ q_2 \leqq y}} a(q_1,q_2)  (\sum_{d_1|q_1 } \mu(\frac{q_1}{d_1})(D  \underset{r}{*}  \bold{1})(d_1,n_1))(\sum_{d_2|q_2 } \mu(\frac{q_2}{d_2}) (D  \underset{r}{*}  \bold{1})(d_2,n_2)).
\end{align*}
Setting $q_1=d_1 k_1$, $q_2=d_2 k_2$ and using Lemma \ref{lem2-2}, we see that the above is equal to
\begin{align*}
 & \sum_{\substack{d_1 k_1 \leqq x \\ d_2 k_2 \leqq y}} a(d_1 k_1, d_2 k_2) \mu(k_1) \mu(k_2) (D  \underset{\ell}{*}  \bold{1}) (d_1,n_1) (D  \underset{\ell}{*}  \bold{1}) (d_2,n_2)  \\
= & \sum_{\substack{d_1 k_1 \leqq x \\ d_2 k_2 \leqq y}} a(d_1 k_1, d_2 k_2) \mu(k_1) \mu(k_2)  I_{d_1|n_1} d_1  I_{d_2|n_2} d_2   \\
= &  \sum_{\substack{d_1 \leqq x \\ d_2 \leqq y}} I_{d_1|n_1} I_{d_2 |n_2}   d_1 d_2 \sum_{\substack{k_1 \leqq x/d_1 \\ k_2 \leqq y/d_2}} \mu (k_1, k_2)  a(d_1 k_1, d_2 k_2),
\end{align*}
where $x,y$ are sufficiently large real numbers. Letting $x,y  \to   \infty$, we have 
\begin{align*}
 \sum_{q_1,q_2=1}^\infty a(q_1,q_2) c_{q_1} (n_1) c_{q_2} (n_2)=& \sum_{d_1 , d_2 =1}^\infty I_{d_1|n_1} I_{d_2 |n_2}  ( d_1 d_2 \sum_{k_1, k_2 =1}^\infty \mu (k_1, k_2)  a(d_1 k_1, d_2 k_2)) \\
=& \sum_{\substack{d_1|n_1 \\ d_2|n_2} }   A(d_1,d_2) =(A* \bold{1})(n_1,n_2) =f(n_1,n_2),
\end{align*}
which proves Theorem \ref{th4-1}. \qed
\end{proof}

The following theorem is an extension of Theorem \ref{th3-2}.

%%%%%%%%%%%%%%%%%%%%%%%%%%%%%%%%% Theorem 4.2 %%%%%%%%%%%%%%%%%%%%%%%%%%%%%%%%%%%%%%%%%55

\begin{thm}
\label{th4-2}
Let $a:\mathbb{N} \times \mathbb{N} \mapsto \mathbb{C}$ be an arithmetic function of two variables. If the series 
\begin{equation}
\nonumber
A(q_1,q_2):=q_1 q_2 \sum_{k_1,k_2=1}^\infty  a(k_1 q_1,k_2 q_2) 
\end{equation}
converges for every $q_1,q_2 \in \mathbb{N}$, then for $f(q_1,q_2)=(A* \mu)(q_1,q_2)$, we have
\begin{equation}
\label{eq:th4-2}
f(q_1,q_2)=\sum_{n_1,n_2=1}^\infty a(n_1,n_2) c_{q_1} (n_1) c_{q_2} (n_2).
\end{equation}
\end{thm}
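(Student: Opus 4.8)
The plan is to imitate the proof of Theorem \ref{th3-2}, running the one-variable argument simultaneously in each of the two coordinates. First I would invoke Lemma \ref{lem2-4} to write $c_{q_i}(n_i)=(\mu \underset{\ell}{*} D \underset{r}{*} \bold{1})(q_i,n_i)=((\mu \underset{\ell}{*} D) \underset{r}{*} \bold{1})(q_i,n_i)$ for $i=1,2$, so that the product $c_{q_1}(n_1)c_{q_2}(n_2)$ is a product of two such expressions, each of which I then expand along its $\underset{r}{*}\bold{1}$.

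Next, for a truncated sum I would compute, writing $n_i=d_i k_i$ and applying Lemma \ref{lem2-2} (which gives $(\mu \underset{\ell}{*} D)(q_i,d_i)=I_{d_i\mid q_i}\,\mu(q_i/d_i)\,d_i$), that
\begin{equation}
\nonumber
\sum_{\substack{n_1 \leqq x \\ n_2 \leqq y}} a(n_1,n_2)\, c_{q_1}(n_1)\, c_{q_2}(n_2) = \sum_{\substack{d_1 \mid q_1 \\ d_2 \mid q_2}} \mu\!\left(\tfrac{q_1}{d_1}\right)\mu\!\left(\tfrac{q_2}{d_2}\right) d_1 d_2 \sum_{\substack{k_1 \leqq x/d_1 \\ k_2 \leqq y/d_2}} a(d_1 k_1, d_2 k_2),
\end{equation}
valid once $x,y$ are large (indeed $x\geqq q_1$, $y\geqq q_2$ already forces $d_1,d_2$ to range over all divisors of $q_1,q_2$). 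This is exactly the bookkeeping in the proof of Theorem \ref{th4-1}, with $\mu(k_1,k_2)$ replaced by $1$.

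Finally I would let $x,y\to\infty$. By the hypothesis that $A(q_1,q_2)=q_1 q_2\sum_{k_1,k_2=1}^\infty a(k_1 q_1,k_2 q_2)$ converges, each inner double sum tends to $\sum_{k_1,k_2\geqq 1} a(d_1 k_1,d_2 k_2) = A(d_1,d_2)/(d_1 d_2)$, so the right-hand side tends to $\sum_{d_1\mid q_1,\,d_2\mid q_2} \mu(q_1/d_1)\mu(q_2/d_2)\,A(d_1,d_2) = (\mu * A)(q_1,q_2) = (A*\mu)(q_1,q_2) = f(q_1,q_2)$, which is (\ref{eq:th4-2}).

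The only delicate point is this last limit: one must fix an interpretation of "$A(q_1,q_2)$ converges" for a double series and check that the rectangular partial sums $\sum_{k_1\leqq x/d_1,\,k_2\leqq y/d_2}$ approach the full double sum as $x,y\to\infty$ independently. I expect this to be the main (and essentially only) obstacle, but it is mild: the outer sum over divisor pairs $(d_1,d_2)$ is \emph{finite}, so no Fubini-type interchange against an infinite sum is required — one simply passes to the limit termwise over the finitely many divisor pairs, adopting the same convention for convergence of double series already used implicitly in the proof of Theorem \ref{th4-1}.
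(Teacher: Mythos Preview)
Your proposal is correct and follows essentially the same route as the paper: the paper's proof likewise expands $c_{q_i}(n_i)=(\mu\underset{\ell}{*}D\underset{r}{*}\bold{1})(q_i,n_i)$, substitutes $n_i=d_ik_i$, uses Lemma~\ref{lem2-2} to replace $(\mu\underset{\ell}{*}D)(q_i,d_i)$ by $I_{d_i\mid q_i}\mu(q_i/d_i)d_i$, and then lets $x,y\to\infty$ in the resulting finite sum over divisor pairs to recover $(\mu*A)(q_1,q_2)=f(q_1,q_2)$. Your extra remark about the passage to the limit in the double series is a welcome point of rigor but does not depart from the paper's argument.
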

\begin{proof}
The proof proceeds along the same lines as the proof of Theorem \ref{th3-2}. We have
\begin{align*}
\sum_{\substack{n_1 \leqq x \\ n_2 \leqq y} } a(n_1,n_2) c_{q_1} (n_1) c_{q_2} (n_2)  = & \sum_{\substack{n_1 \leqq x \\ n_2 \leqq y} } a(n_1,n_2)  (\mu \underset{\ell}{*} D \underset{r}{*}  \bold{1} )(q_1,n_1) (\mu \underset{\ell}{*} D \underset{r}{*}  \bold{1} )(q_2,n_2) \\
= &  \sum_{\substack{n_1 \leqq x \\ n_2 \leqq y} } a(n_1,n_2)  (\sum_{d_1|n_1} (\mu  \underset{\ell}{*}  D)(q_1,d_1) \bold{1} (\frac{n_1}{d_1}) )(  \sum_{d_2|n_2} (\mu  \underset{\ell}{*}  D)(q_2,d_2)\bold{1} (\frac{n_2}{d_2}) ).
\end{align*}
Setting $n_1=d_1 k_1$, $n_2=d_2 k_2$ and using Lemma \ref{lem2-2}, we see that the above is equal to
\begin{align*}
& \sum_{\substack{d_1 k_1 \leqq x \\ d_2 k_2 \leqq y}} a(d_1 k_1, d_2 k_2) (\mu  \underset{\ell}{*}  D) (q_1,d_1)  (\mu  \underset{\ell}{*}  D) (q_2,d_2) \\
= &  \sum_{\substack{d_1 k_1 \leqq x \\ d_2 k_2 \leqq y}} a(d_1 k_1, d_2 k_2) I_{d_1|q_1} \mu(\frac{q_1}{d_1}) d_1  I_{d_2|q_2} \mu(\frac{q_2}{d_2}) d_2 \\
= &  \sum_{\substack{d_1  \leqq x \\ d_2 \leqq y}} I_{d_1|q_1} I_{d_2|q_2} \mu(\frac{q_1}{d_1}) \mu(\frac{q_2}{d_2}) (d_1 d_2 \sum_{\substack{k_1 \leqq x/d_1 \\ k_2 \leqq y/d_2} } a(d_1 k_1,d_2 k_2)),
\end{align*}
where $x,y$ are sufficiently large real numbers. Letting $x,y  \to   \infty$, we have 
\begin{align*}
\sum_{n_1,n_2=1}^\infty a(n_1,n_2) c_{q_1} (n_1) c_{q_2} (n_2)  =& \sum_{d_1,d_2=1}^\infty I_{d_1|q_1} I_{d_2|q_2} \mu(\frac{q_1}{d_1}) \mu(\frac{q_2}{d_2}) A(d_1,d_2) \\
  =& \sum_{\substack{d_1|q_1 \\ d_2 |q_2}}  \mu(\frac{q_1}{d_1}) \mu(\frac{q_2}{d_2})  A(d_1,d_2) =(\mu * A)(q_1,q_2) =f(q_1,q_2),
\end{align*}
which proves Theorem \ref{th4-2}. \qed
\end{proof}

The following example is an extension of Example \ref{ex3-2}.

%%%%%%%%%%%%%%%%%%%%%%%%%%%%%%%% Example 4.1 %%%%%%%%%%%%%%%%%%%%%%%%%%%%%%%%%%%
\begin{exa}
\label{ex4-1}
Let $s>1.$ Then we have
\begin{equation}
\label{eq:ex4-1}
\Bigl(\prod_{p \in \mathcal{P}} (1-\frac{2}{p^s}) \Bigr) (\frac{q_1 q_2 \mu (q_1 q_2)}{ \widetilde{\varphi_s} (q_1)\widetilde{\varphi_s} (q_2)  } * \mu) (q_1,q_2)=\sum_{n_1,n_2=1}^\infty \frac{\mu(n_1 n_2)}{(n_1 n_2)^s} c_{q_1} (n_1) c_{q_2} (n_2),
\end{equation}
where $\widetilde{\varphi_s} (q)=q^s \prod_{p |q} (1-2/p^s).$ 
\end{exa}
\begin{proof}
Setting $a(n_1,n_2)=\frac{\mu(n_1 n_2)}{(n_1 n_2)^s}$ we have
\begin{align*}
A(q_1,q_2) & =q_1 q_2 \sum_{k_1,k_2=1}^\infty  a(k_1 q_1,k_2 q_2)=q_1 q_2 \sum_{k_1,k_2=1}^\infty \frac{\mu(k_1 q_1 k_2 q_2)}{(k_1 q_1 k_2 q_2)^s} \\
& = q_1 q_2 \sum_{\substack{k_1,k_2 \geqq 1 \\ (k_1 k_2, \ q_1 q_2)=1}}  \frac{\mu(k_1  k_2 ) \mu(q_1 q_2)}{(k_1  k_2 )^s (q_1  q_2)^{s}} = \frac{\mu(q_1 q_2)}{(q_1  q_2)^{s-1}} \sum_{\substack{k \geqq 1 \\ (k, \ q_1 q_2)=1}}  \frac{\mu(k )}{k^s} \sum_{k_1 \mid k} 1 \\
&= \frac{\mu(q_1 q_2)}{(q_1  q_2)^{s-1}} \sum_{\substack{k \geqq 1 \\ (k, \ q_1 q_2)=1}}  \frac{\mu(k ) \tau (k)}{k^s}  = \frac{\mu(q_1 q_2)}{(q_1  q_2)^{s-1}} \prod_{p \nmid q_1 q_2} (1+\frac{\mu(p) \tau(p)}{p^s}) \\
&= \frac{\mu(q_1 q_2)}{(q_1  q_2)^{s-1}} \prod_{p \nmid q_1 q_2} (1-\frac{2}{p^s}) =\frac{\mu(q_1 q_2)}{(q_1  q_2)^{s-1}} \frac{ \prod_{p \in \mathcal{P}} (1-2/p^s)}{\prod_{p \mid q_1 q_2} (1-2/p^s)} .
\end{align*}
If $(q_1,q_2)>1$, then $A(q_1,q_2) =0$ since $\mu(q_1 q_2)=0$. If $(q_1,q_2)=1$, then we have
\begin{align*}
A(q_1,q_2)  & = \frac{\mu(q_1 q_2)}{(q_1  q_2)^{s-1}} \frac{ \prod_{p \in \mathcal{P}} (1-2/p^s)}{\prod_{p \mid q_1 } (1-2/p^s) \prod_{p \mid q_2} (1-2/p^s)}   \\
 &= \frac{q_1 q_2 \mu(q_1 q_2)}{(q_1^s \prod_{p \mid q_1} (1-2/p^s))( q_2^s \prod_{p \mid q_2} (1-2/p^s))} \prod_{p \in \mathcal{P}} (1-2/p^s) \\
& = \frac{q_1 q_2 \mu (q_1 q_2)}{ \widetilde{\varphi_s} (q_1)\widetilde{\varphi_s} (q_2)  }   \prod_{p \in \mathcal{P}} (1-2/p^s),
\end{align*}
which clearly holds also in the case $(q_1,q_2)>1$.
If we set $f=A*\mu$, then Theorem \ref{th4-2} gives the desired result. \qed
\end{proof}

%%%%%%%%%%%%%%%%%%%%%%%%%% Remark 4.1 %%%%%%%%%%%%%%%%%%%%%%%%%%%%%%

\begin{rem}
We consider the case $s \downarrow 1$ in (\ref{eq:ex4-1}), where the notation $s \downarrow 1$ means that $s$ approaches $1$ from above. Since
\begin{align*}
\prod_{p \in \mathcal{P}} (1-\frac{2}{p^s}) = &  (1-\frac{2}{2^s}) \prod_{\substack{p \in \mathcal{P} \\ p \geqq 3}} (1-\frac{2}{p^s}) = (1-\frac{2}{2^s}) \prod_{\substack{p \in \mathcal{P} \\ p \geqq 3}} (1-2/p^s) \frac{(1-1/p^s)^2}{1-2/p^s+1/p^{2s}} \\
 = & (1-\frac{2}{2^s}) \prod_{\substack{p \in \mathcal{P} \\ p \geqq 3}}  \frac{(1-1/p^s)^2}{\frac{1-2/p^s+1/p^{2s}}{(1-2/p^s)}} = (1-\frac{2}{2^s}) \prod_{\substack{p \in \mathcal{P} \\ p \geqq 3}} \frac{(1-1/p^s)^2}{1+\frac{1}{p^{2s} (1-2/p^s)}} \\
=& (1-\frac{2}{2^s}) \prod_{\substack{p \in \mathcal{P} \\ p \geqq 3}} (1-\frac{1}{p^s})^2 \prod_{\substack{p \in \mathcal{P} \\ p \geqq 3}} \frac{1}{1+\frac{1}{p^{s} (p^s-2)}}  \\
= & (1-\frac{2}{2^s}) \frac{1}{(1-\frac{1}{2^s})^2 \zeta^2 (s)} \prod_{\substack{p \in \mathcal{P} \\ p \geqq 3}} \frac{1}{1+\frac{1}{p^{s} (p^s-2)}} ,
\end{align*}
we have
\begin{align*}
& \lim_{s \downarrow  1} \Bigl(\prod_{p \in \mathcal{P}} (1-\frac{2}{p^s}) \Bigr) \frac{q_1 q_2 \mu (q_1 q_2)}{ \widetilde{\varphi_s} (q_1)\widetilde{\varphi_s} (q_2)  } \\
  = &\lim_{s \downarrow  1}   (1-\frac{2}{2^s})  \frac{1}{(1-\frac{1}{2^s})^2 \zeta^2 (s)} \Bigl(\prod_{\substack{p \in \mathcal{P} \\ p \geqq 3}} \frac{1}{1+\frac{1}{p^s (p^s-2)}} \Bigr)  \frac{q_1 q_2 \mu (q_1 q_2)}{ \widetilde{\varphi_s} (q_1)\widetilde{\varphi_s} (q_2)  }  \\
 =&  \lim_{s \downarrow  1}   (1-\frac{2}{2^s})  \frac{1}{\zeta^2 (s)} \frac{q_1 q_2 \mu (q_1 q_2)}{q_1^s \prod_{p |q_1} (1-2/p^s) q_2^s \prod_{p |q_2} (1-2/p^s)    }  \frac{1}{(1-\frac{1}{2})^2} \prod_{\substack{p \in \mathcal{P} \\ p \geqq 3}} \frac{1}{1+\frac{1}{p (p-2)}} \\
= & \lim_{s \downarrow  1}  \frac{1}{\zeta^2 (s) } \frac{ 1-2/2^s }{ \prod_{p |q_1} (1-2/p^s)  \prod_{p |q_2} (1-2/p^s)} \mu (q_1 q_2) \frac{1}{(1-\frac{1}{2})^2} \prod_{\substack{p \in \mathcal{P} \\ p \geqq 3}} \frac{1}{1+\frac{1}{p (p-2)}}=0,
\end{align*}
where we note that, since $\mu(q_1 q_2)=0$ if $q_1$ and $q_2$ are even, we may assume $q_1$ or $q_2$ is odd. 
Therefore by letting $s \downarrow 1$ in (\ref{eq:ex4-1}), we obtain 
\begin{equation}
\nonumber
\sum_{n_1,n_2=1}^\infty \frac{\mu(n_1 n_2)}{n_1 n_2} c_{q_1} (n_1) c_{q_2} (n_2)=0,
\end{equation}
which is an extension of (\ref{ram3-3}) to the case of two variables.
Of course an extension of (\ref{eq:ex3-3}) 
\begin{equation}
\nonumber
\sum_{n_1,n_2=1}^\infty \frac{\lambda (n_1 n_2 )}{n_1 n_2} c_{q_1} (n_1) c_{q_2} (n_2)=0
\end{equation}
clearly holds since $\lambda$ is completely multiplicative.
\end{rem}

Next we consider extensions of Theorem \ref{th:Delange} and Theorem \ref{th3-4}.
Ushiroya \cite{Ushiroya3} proved the following theorem which is an extension of Theorem \ref{th:Delange}. 

%%%%%%%%%%%%%%%%%%%%%%%%%%%%%%%%%%% Theorem 4.3 %%%%%%%%%%%%%%%%%%%%%%%%%%%%%%%

\begin{thm}[\cite{Ushiroya3}]
{\rm{(i)}}
Let $f(n_1,n_2)$ be an arithmetic function of two variables satisfying
\begin{equation}
\nonumber
\sum_{n_1,n_2=1}^\infty 2^{\omega(n_1)} 2^{\omega(n_2)} \frac{|(f*\mu) (n_1,n_2)|}{n_1 n_2} < \infty . 
\end{equation}
Then its Ramanujan-Fourier series is pointwise convergent and
\begin{equation}
\nonumber
f(n_1,n_2)=\sum_{q_1,q_2=1}^\infty a(q_1,q_2) c_{q_1} (n_1)c_{q_2} (n_2) 
\end{equation}
holds where 
\begin{equation}
\nonumber
a(q_1,q_2)  = \sum_{m_1,m_2=1}^\infty \frac{(f * \mu) (m_1 q_1, m_2 q_2) }{m_1 q_1 m_2 q_2}.  
\end{equation}

{\rm{(ii)}}
Let $f$ be a multiplicative function of two variables satisfying
\begin{equation}
\nonumber
 \sum_{p \in \mathcal{P} } \sum_{\substack{e_1 ,e_2  \geqq 0 \\ e_1+e_2 \geqq 1}} \frac{|(f * \mu ) (p^{e_1}, p^{e_2})|}{p^{e_1+e_2}}<\infty.
\end{equation}
Then its Ramanujan-Fourier series is pointwise convergent and
\begin{equation}
\nonumber
f(n_1,n_2)=\sum_{q_1,q_2=1}^\infty a(q_1,q_2) c_{q_1} (n_1)c_{q_2} (n_2) 
\end{equation}
holds where 
\begin{equation}
\nonumber
a(q_1,q_2)  = \sum_{m_1,m_2=1}^\infty \frac{(f * \mu) (m_1 q_1, m_2 q_2) }{m_1 q_1 m_2 q_2} .
\end{equation}
Moreover, if the mean value $M(f)=\lim_{x \to \infty} \sum_{n \leqq x} f(n) $ is not zero and if $\{ q_1,q_2 \} >1$, where $\{q_1,q_2 \}$ denotes the least common multiple of $q_1$ and $q_2$
, then $a(q_1,q_2) $ can be rewritten as
\begin{align}
a(q_1,q_2) & =\prod_{p \in \mathcal{P}} \Bigl( \sum_{e_1=\nu_p (q_1)} \sum_{e_2=\nu_p (q_2)} \frac{(f * \mu ) (p^{e_1}, p^{e_2})}{p^{e_1+e_2}} \Bigr) \nonumber \\
\label{eq:th2-4-2}
 & =M(f) \prod_{p \mid \{q_1,q_2 \}} \Bigl\{ \Bigl( \sum_{e_1=\nu_p (q_1)} \sum_{e_2=\nu_p (q_2)} \frac{(f * \mu ) (p^{e_1}, p^{e_2})}{p^{e_1+e_2}} \Bigr) / \Bigl( \sum_{e_1=0} \sum_{e_2=0} \frac{(f * \mu ) (p^{e_1}, p^{e_2})}{p^{e_1+e_2}} \Bigr) \Bigr\}.
\end{align}
\end{thm}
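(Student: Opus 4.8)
The plan is to deduce (i) from Theorem \ref{th4-1}, exactly as Delange's Theorem \ref{th:Delange} is deduced from Lucht's Theorem \ref{th:Lucht}, and then to obtain (ii) from (i) together with the Euler product argument used in the proof of Remark \ref{rem3-2}.

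For (i), I would put $A:=f*\mu$, so that $f=A*\bold{1}$ by M\"obius inversion (recall $(\mu*\bold{1})(n_1,n_2)=\delta(n_1)\delta(n_2)$), and define
\[
a(q_1,q_2):=\sum_{m_1,m_2=1}^\infty \frac{A(m_1 q_1,m_2 q_2)}{m_1 q_1 m_2 q_2}.
\]
Since each $m_i q_i$ is a multiple of $q_i$, the hypothesis gives at once the absolute convergence of this series, as it is bounded by $(q_1 q_2)^{-1}\sum_{n_1,n_2}|A(n_1,n_2)|/(n_1 n_2)<\infty$. The decisive step is then to verify that this $a$ reproduces $A$ through the formula occurring in Theorem \ref{th4-1}, namely
\[
A(n_1,n_2)=n_1 n_2\sum_{k_1,k_2=1}^\infty \mu(k_1,k_2)\,a(k_1 n_1,k_2 n_2).
\]
Substituting the definition of $a$, writing $\ell_i=k_i m_i$, and using $\sum_{k\mid\ell}\mu(k)=\delta(\ell)$ collapses the resulting quadruple sum to $A(n_1,n_2)$; the rearrangement is legitimate because $\sum_{k\mid\ell}|\mu(k)|=2^{\omega(\ell)}$ and $2^{\omega(\ell_i)}\leqq 2^{\omega(n_i\ell_i)}$, so the sum of absolute values is dominated, after reindexing by the multiples $N_i=n_i\ell_i$, by $n_1 n_2\sum_{N_1,N_2\geqq1}2^{\omega(N_1)}2^{\omega(N_2)}|A(N_1,N_2)|/(N_1 N_2)<\infty$. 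Theorem \ref{th4-1} then gives the pointwise identity $f(n_1,n_2)=\sum_{q_1,q_2}a(q_1,q_2)c_{q_1}(n_1)c_{q_2}(n_2)$, the double series being summed over rectangles.

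For (ii), the first thing to observe is that the single-prime hypothesis, combined with the multiplicativity of $f*\mu$ as a function of two variables, forces the hypothesis of (i): the function $(n_1,n_2)\mapsto 2^{\omega(n_1)}2^{\omega(n_2)}(f*\mu)(n_1,n_2)/(n_1 n_2)$ is multiplicative in two variables, and since $2^{\omega(p^{e_1})}2^{\omega(p^{e_2})}\leqq 4$, its $p$-local sum exceeds $\sum_{e_1+e_2\geqq1}|(f*\mu)(p^{e_1},p^{e_2})|/p^{e_1+e_2}$ by a factor of at most $4$; the estimate $1+x\leqq e^x$ then converts the single-prime condition into the required convergence exactly as in the proof of Remark \ref{rem3-2}. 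Hence (i) applies and gives the stated formula for $a(q_1,q_2)$. For the product representation, since $f*\mu$ is multiplicative in two variables and the series defining $a(q_1,q_2)$ converges absolutely, it factors as a product over primes; absorbing the $p$-component of $m_i$ into the exponent and writing $e_i=\nu_p(q_i)+\nu_p(m_i)$ turns the $p$-factor into $\sum_{e_1\geqq\nu_p(q_1)}\sum_{e_2\geqq\nu_p(q_2)}(f*\mu)(p^{e_1},p^{e_2})/p^{e_1+e_2}$, which is the first line of (\ref{eq:th2-4-2}). For the second line, note that $M(f)=a(1,1)=\prod_{p}\bigl(\sum_{e_1,e_2\geqq0}(f*\mu)(p^{e_1},p^{e_2})/p^{e_1+e_2}\bigr)$; when $M(f)\neq0$ each local factor is nonzero, so splitting the product for $a(q_1,q_2)$ according to whether $p\mid\{q_1,q_2\}$ — at the remaining primes one has $\nu_p(q_1)=\nu_p(q_2)=0$, so the factor is precisely the one occurring in $M(f)$ — and dividing out those factors yields the quotient expression in (\ref{eq:th2-4-2}).

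I expect the main obstacle to be the bookkeeping in the interchange of summation in part (i), and in part (ii) the careful justification of the factorization of the double series defining $a(q_1,q_2)$ into an Euler product over the primes; both reduce to absolute convergence estimates governed by the $2^{\omega}\cdot 2^{\omega}$ hypothesis (respectively its multiplicative consequence), so they are routine, but that is where essentially all the analytic content of the theorem sits.
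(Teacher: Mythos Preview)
The paper does not actually prove this theorem: it is quoted from \cite{Ushiroya3} and stated without proof, so there is no ``paper's own proof'' to compare against. Your argument is nonetheless the natural one and mirrors exactly how the present paper handles the dual analogues: your deduction of (i) from Theorem \ref{th4-1} parallels the paper's deduction of Theorem \ref{th3-4} from Theorem \ref{th3-2} (and of Theorem \ref{th4-4} from Theorem \ref{th4-2}), and your passage from the prime-power hypothesis to the global one in (ii) is the same $1+x\leqq e^x$ trick used in the proofs of Remarks \ref{rem3-2} and \ref{rem4-2}.

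Two small points worth tightening. First, in (ii) you invoke multiplicativity of $(n_1,n_2)\mapsto 2^{\omega(n_1)}2^{\omega(n_2)}(f*\mu)(n_1,n_2)/(n_1 n_2)$ to bound the partial sums by an Euler product; strictly speaking you need the bound for the sum of absolute values, and what is really used is $|g(n_1,n_2)|=\prod_p|g(p^{\nu_p(n_1)},p^{\nu_p(n_2)})|$ for multiplicative $g$, which gives $\sum|g|\leqq\prod_p(1+\sum_{e_1+e_2\geqq1}|g(p^{e_1},p^{e_2})|)$ directly. Second, the identification $M(f)=a(1,1)$ is a two-variable Wintner-type statement (write $f=(f*\mu)*\bold{1}$ and average); it is standard but deserves a sentence, especially since the paper's displayed definition of $M(f)$ appears to be a one-variable formula and is presumably a misprint for the two-variable mean.
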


We remark that many examples of the form 
\begin{equation}
\nonumber
f(n_1,n_2)=\sum_{q_1,q_2=1}^\infty a(q_1,q_2) c_{q_1} (n_1) c_{q_2} (n_2)
\end{equation}
are obtained in \cite{Ushiroya3}.

Next we extend Theorem \ref{th3-4} to dual Ramanujan-Fourier series.

%%%%%%%%%%%%%%%%%%%%%%%%%%%%%%%%%%%%Theorem 4.4 %%%%%%%%%%%%%%%%%%%%%%%%%%%%%%%%%%55

\begin{thm}
\label{th4-4}
Let $f$ be an arithmetic function of two variables satisfying
\begin{equation}
\label{eq1:th4-4}
\sum_{q_1,q_2=1}^\infty  \frac{|(f * \bold{1})(q_1,q_2)|}{q_1 q_2} \tau(q_1) \tau(q_2) < \infty.
\end{equation}
Then its dual Ramanujan-Fourier series is pointwise convergent and
\begin{equation}
\nonumber
f(q_1,q_2)=\sum_{n_1,n_2=1}^\infty a(n_1,n_2) c_{q_1} (n_1) c_{q_2} (n_2)
\end{equation}
holds where 
\begin{equation}
\label{eq2:th4-4}
a(n_1,n_2)=\sum_{m_1,m_2=1}^\infty \frac{(f * \bold{1})(n_1 m_1,n_2m_2)}{n_1 m_1 n_2 m_2} \mu(m_1,m_2).  
\end{equation}
\end{thm}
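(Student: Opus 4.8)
The plan is to follow the proof of Theorem~\ref{th3-4} step for step, carrying out each manipulation in both coordinates at once. The key point is that the kernel factorises, $c_{q_1}(n_1)c_{q_2}(n_2)=(\mu\underset{\ell}{*}D\underset{r}{*}\bold{1})(q_1,n_1)\,(\mu\underset{\ell}{*}D\underset{r}{*}\bold{1})(q_2,n_2)$, so that Lemmas~\ref{lem2-1}, \ref{lem2-2} and \ref{lem2-5} apply to each factor separately, exactly as is already done in the proofs of Theorems~\ref{th4-1} and \ref{th4-2}.

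First I would show that $A(q_1,q_2)=q_1q_2\sum_{k_1,k_2\geqq1}a(k_1q_1,k_2q_2)$ converges absolutely. Inserting the definition (\ref{eq2:th4-4}) of $a$ and grouping the terms according to the values $\ell_i=k_iq_im_i$ (each pair $(\ell_1,\ell_2)$ arising from at most $\tau(\ell_1)\tau(\ell_2)$ tuples), one gets
\begin{align*}
\sum_{\substack{k_1\leqq x\\ k_2\leqq y}}|a(k_1q_1,k_2q_2)|
&\leqq \sum_{k_1,k_2\geqq1}\sum_{m_1,m_2\geqq1}\frac{|(f*\bold{1})(k_1q_1m_1,k_2q_2m_2)|}{k_1q_1m_1\,k_2q_2m_2}\\
&\leqq \sum_{\ell_1,\ell_2\geqq1}\frac{|(f*\bold{1})(\ell_1,\ell_2)|}{\ell_1\ell_2}\,\tau(\ell_1)\tau(\ell_2)<\infty
\end{align*}
by hypothesis (\ref{eq1:th4-4}), valid for all $x,y>1$. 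Repeating the same rearrangement without absolute values and using $\sum_{m_1\mid\ell_1,\,m_2\mid\ell_2}\mu(m_1)\mu(m_2)=\delta(\ell_1)\delta(\ell_2)$ collapses the double series to $A(q_1,q_2)=(f*\bold{1})(q_1,q_2)$; since $(\mu*\bold{1})(n_1,n_2)=\delta(n_1)\delta(n_2)$ is the identity for Dirichlet convolution of two-variable functions, this is the same as $f=\mu*A$.

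It then remains to read off the dual Ramanujan-Fourier series. By Lemma~\ref{lem2-5} applied in each variable,
\begin{align*}
A(q_1,q_2)=\sum_{m_1,m_2\geqq1}a(m_1,m_2)\,q_1I_{q_1\mid m_1}\,q_2I_{q_2\mid m_2}=\sum_{m_1,m_2\geqq1}a(m_1,m_2)\,(\bold{1}\underset{\ell}{*}c)(q_1,m_1)\,(\bold{1}\underset{\ell}{*}c)(q_2,m_2).
\end{align*}
Convolving with $\mu$ in $q_1$ and in $q_2$ and applying Lemma~\ref{lem2-1} in each coordinate, $\mu\underset{\ell}{*}(\bold{1}\underset{\ell}{*}c)=(\mu*\bold{1})\underset{\ell}{*}c=\delta\underset{\ell}{*}c$ with $(\delta\underset{\ell}{*}c)(q_i,m_i)=c_{q_i}(m_i)$, so that
\begin{align*}
f(q_1,q_2)=(\mu*A)(q_1,q_2)=\sum_{m_1,m_2\geqq1}a(m_1,m_2)\,c_{q_1}(m_1)c_{q_2}(m_2),
\end{align*}
which is the assertion; the pointwise convergence of the right-hand series is guaranteed, as in Theorem~\ref{th4-2}, by the absolute convergence of each $A(d_1,d_2)$ with $d_1\mid q_1,\ d_2\mid q_2$. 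Equivalently, once the convergence of $A$ is in hand one may simply quote Theorem~\ref{th4-2} to get $\sum a(n_1,n_2)c_{q_1}(n_1)c_{q_2}(n_2)=(A*\mu)(q_1,q_2)$, and the identity $A=f*\bold{1}$ finishes the proof. I expect no genuine obstacle beyond the one-variable Theorem~\ref{th3-4}: the only care needed is the bookkeeping in the first step, namely justifying the interchanges of summation and the passage $x,y\to\infty$, all licensed by the absolute convergence established there.
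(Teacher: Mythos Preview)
Your proposal is correct and follows essentially the same route as the paper: both bound $\sum|a(k_1q_1,k_2q_2)|$ by the $\tau$-weighted sum in (\ref{eq1:th4-4}), rewrite $A(q_1,q_2)$ via Lemma~\ref{lem2-5} as $\sum a(m_1,m_2)(\bold{1}\underset{\ell}{*}c)(q_1,m_1)(\bold{1}\underset{\ell}{*}c)(q_2,m_2)$, and then convolve with $\mu$ coordinate-wise to collapse to $c_{q_i}$. Your write-up is in fact slightly more explicit than the paper's, since you spell out the computation $A=(f*\bold{1})$ (hence $f=\mu*A$), which the paper invokes without comment; your alternative of quoting Theorem~\ref{th4-2} once the convergence of $A$ is in hand is also perfectly valid.
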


%%%%%%%%%%%%%%%%%%%%%%%%%%%%%%%%%%%% Remark 4.2 %%%%%%%%%%%%%%%%%%%%%%%%%%%%%%%%%%

\begin{rem} 
\label{rem4-2}
Let $f$ be a multiplicative function of two variables satisfying
\begin{equation}
\label{eq3:th4-4}
\sum_{p \in \mathcal{P}} \sum_{\substack{e_1,e_2 \geqq 0 \\ e_1+e_2 \geqq 1}} \frac{|(f* \bold{1}) (p^{e_1},p^{e_2})|}{p^{e_1+e_2} } (e_1+1)(e_2+1) < \infty.
\end{equation}
Then its dual Ramanujan-Fourier series is pointwise convergent and
\begin{equation}
\nonumber
f(q_1,q_2)=\sum_{n_1,n_2=1}^\infty a(n_1,n_2) c_{q_1} (n_1) c_{q_2} (n_2)
\end{equation}
holds where 
\begin{equation}
\nonumber
a(n_1,n_2)=\sum_{m_1,m_2=1}^\infty \frac{(f * \bold{1})(n_1 m_1,n_2 m_2)}{n_1 m_1 n_2 m_2} \mu(m_1,m_2).  
\end{equation}
Moreover, $a(n_1,n_2)$ can be rewritten as
\begin{align*}
a(n_1,n_2)=\prod_{p \in \mathcal{P}} \Bigl( &  \frac{(f * \bold{1})(p^{\nu_p(n_1)},p^{\nu_p(n_2)})}{p^{\nu_p(n_1) + \nu_p(n_2)}} -\frac{(f * \bold{1})(p^{\nu_p(n_1)+1},p^{\nu_p(n_2)})}{p^{\nu_p(n_1) + \nu_p(n_2)+1}}  \\
& -\frac{(f * \bold{1})(p^{\nu_p(n_1)},p^{\nu_p(n_2)+1})}{p^{\nu_p(n_1) + \nu_p(n_2)+1}}+\frac{(f * \bold{1})(p^{\nu_p(n_1)+1},p^{\nu_p(n_1)+2})}{p^{\nu_p(n_1)+\nu_p(n_2)+2}} \Bigr).
\end{align*}

\end{rem}

\begin{proof}[Proof of Theorem \ref{th4-4}.]
We proceed along the same lines as the proof of Theorem \ref{th3-4}.
We first see that $A(q_1,q_2)=q_1 q_2 \sum_{k_1,k_2=1}^\infty a(k_1 q_1,k_2 q_2)$ converges since
\begin{align*}
\sum_{\substack{k_1 \leqq x \\ k_2 \leqq y}} |a(k_1 q_1,k_2 q_2)| & \leqq \sum_{\substack{k_1 \leqq x \\ k_2 \leqq y}} \sum_{m_1,m_2=1}^\infty \frac{|(f * \bold{1})(k_1 q_1 m_1,k_2 q_2 m_2)|}{k_1 q_1 m_1 k_2 q_2 m_2} |\mu(m_1,m_2)| \\
 & \leqq \sum_{\ell_1,\ell_2=1}^\infty  \frac{|(f* \bold{1}) (\ell_1,\ell_2)|}{\ell_1 \ell_2} \sum_{\substack{k_1 \leqq x, \ k_1 | \ell_1 \\ k_2 \leqq y, \ k_2 | \ell _2}} 1 \\
 & \leqq \sum_{\ell_1,\ell_2=1}^\infty  \frac{|(f* \bold{1}) (\ell_1,\ell_2)|}{\ell_1 \ell_2} \tau(\ell_1) \tau(\ell_2) < \infty.
\end{align*}
Using Lemma \ref{lem2-5} we can rewrite $A(q_1,q_2)$ as
\begin{align*}
A(q_1,q_2) &=q_1 q_2 \sum_{k_1,k_2=1}^\infty a(k_1 q_1,k_2 q_2)= \sum_{m_1,m_2=1}^\infty a(m_1,m_2) q_1 I_{q_1|m_1} q_2 I_{q_2 |m_2}  \\
& = \sum_{m_1,m_2=1}^\infty a(m_1,m_2) ( \bold{1} \underset{\ell}{*} c)(q_1,m_1) ( \bold{1} \underset{\ell}{*} c)(q_2,m_2).
\end{align*}
From this we have
\begin{align*}
f(q_1,q_2) &=(\mu * A)(q_1,q_2)=\sum_{m_1,m_2=1}^\infty a(m_1,m_2) (\mu \underset{\ell}{*} ( \bold{1} \underset{\ell}{*} c))(q_1,m_1) (\mu \underset{\ell}{*} ( \bold{1} \underset{\ell}{*} c))(q_2,m_2) \\
& =\sum_{m_1,m_2=1}^\infty a(m_1,m_2) (\delta \underset{\ell}{*} c)(q_1,m_1) (\delta \underset{\ell}{*} c)(q_2,m_2) \\
 &=\sum_{m_1,m_2=1}^\infty a(m_1,m_2) c_{q_1} (m_1) c_{q_2} (m_2).
\end{align*}
This completes the proof of Theorem \ref{th4-4}. \qed
\end{proof}

\begin{proof}[Proof of Remark \ref{rem4-2}.]
We first note that, if $f$ is a multiplicative function of two variables, then $(q_1,q_2) \mapsto \frac{(f * \bold{1})(q_1,q_2)}{q_1 q_2} \tau(q_1) \tau(q_2)$ is also a multiplicative function of two variables. Using $1+x \leqq \exp(x)$, we see that (\ref{eq1:th4-4}) follows from (\ref{eq3:th4-4}) since 
\begin{align*}
\sum_{\substack{q_1 \leqq Q_1 \\ q_2 \leqq Q_2}}  \frac{|(f * \bold{1})(q_1,q_2)|}{q_1 q_2} \tau(q_1) \tau(q_2) \leqq & \prod_{p \in \mathcal{P}} \Bigl(1+\sum_{\substack{e_1,e_2 \geqq 0 \\ e_1+e_2 \geqq 1}} \frac{|(f * \bold{1})(p^{e_1},p^{e_2})|}{p^{e_1} p^{e_2} } \tau(p^{e_1}) \tau(p^{e_2}) \Bigr) \\
 \leqq & \prod_{p \in \mathcal{P}} \exp \Bigl( \sum_{\substack{e_1,e_2 \geqq 0 \\ e_1+e_2 \geqq 1}} \frac{|(f * \bold{1})(p^{e_1},p^{e_2})|}{p^{e_1} p^{e_2} } \tau(p^{e_1}) \tau(p^{e_2}) \Bigr) \\
 = & \exp \Bigl( \sum_{p \in \mathcal{P}} \sum_{\substack{e_1,e_2 \geqq 0 \\ e_1+e_2 \geqq 1}} \frac{|(f * \bold{1})(p^{e_1},p^{e_2})|}{p^{e_1+e_2} }  (e_1+1)(e_2+1) \Bigr) < \infty
\end{align*}
holds for any $Q_1, Q_2>1$. Therefore (\ref{eq2:th4-4}) holds by Theorem \ref{th4-4}. In the expression of (\ref{eq2:th4-4}),  we set, for $i=1,2$, $n_i=\prod p_j^{e_{ij}} $,  $m_i=r_i \prod p_j^{d_{ij}}$  where  $(r_i,n_1 n_2)=1$,  $e_{ij} \geqq 1$, and $d_{ij} \geqq 0$.
Then we have
\begin{align*}
a(n_1,n_2)= & \sum_{\substack{d_{ij} \geqq 0, \ r_i \geqq 1 \\ (r_i,n_1 n_2)=1}} \frac{(f*\bold{1}) (r_1 \prod p_j^{d_{1j}+e_{1j}}, r_2 \prod p_j^{d_{2j}+e_{2j}})}{r_1 r_2 \prod p_j^{d_{1j}+e_{1j}+d_{2j}+e_{2j}}} \mu(r_1 \prod p_j^{d_{1j}}, r_2 \prod p_j^{d_{2j}}) .
\end{align*}
Since $f*\bold{1}$ is multiplicative and since $\mu( p_j^{d_{1j}}, p_j^{d_{2j}}) = 0$ if $d_{1j} \geqq 2$ or  $d_{2j} \geqq 2$ for some $j$, we obtain
\begin{align*}
a(n_1,n_2)=  & \prod_j \Bigl( \sum_{0 \leqq d_{ij} \leqq 1} \frac{(f*\bold{1}) (  p_j^{d_{1j}+e_{1j}}, p_j^{d_{2j}+e_{2j}})}{ p_j^{d_{1j}+e_{1j}+d_{2j}+e_{2j}}} \mu(  p_j^{d_{1j}},  p_j^{d_{2j}}) \Bigr) \times \sum_{\substack{r_i \geqq 1 \\ (r_,n_1 n_2)=1}} \frac{(f*\bold{1}) (r_1,r_2 )}{r_1 r_2 } \mu(r_1,r_2) \\
= & \prod_{p|n_1 n_2}  \Bigl(  \frac{(f * \bold{1})(p^{\nu_p(n_1)},p^{\nu_p(n_2)})}{p^{\nu_p(n_1) + \nu_p(n_2)}} -\frac{(f * \bold{1})(p^{\nu_p(n_1)+1},p^{\nu_p(n_2)})}{p^{\nu_p(n_1) + \nu_p(n_2)+1}}  \\
  & \quad \quad \quad - \frac{(f * \bold{1})(p^{\nu_p(n_1)},p^{\nu_p(n_2)+1})}{p^{\nu_p(n_1) + \nu_p(n_2)+1}}+\frac{(f * \bold{1})(p^{\nu_p(n_1)+1},p^{\nu_p(n_1)+2})}{p^{\nu_p(n_1)+\nu_p(n_2)+2}} \Bigr) \\
& \quad \quad \quad \times \prod_{p \nmid n_1 n_2}  (1-\frac{f(p,1)}{p}-\frac{f(1,p)}{p}+\frac{f(p,p)}{p^2}) \\
= & \prod_{p \in \mathcal{P}} \Bigl(   \frac{(f * \bold{1})(p^{\nu_p(n_1)},p^{\nu_p(n_2)})}{p^{\nu_p(n_1) + \nu_p(n_2)}} -\frac{(f * \bold{1})(p^{\nu_p(n_1)+1},p^{\nu_p(n_2)})}{p^{\nu_p(n_1) + \nu_p(n_2)+1}}  \\
& \quad  \quad -\frac{(f * \bold{1})(p^{\nu_p(n_1)},p^{\nu_p(n_2)+1})}{p^{\nu_p(n_1) + \nu_p(n_2)+1}}+\frac{(f * \bold{1})(p^{\nu_p(n_1)+1},p^{\nu_p(n_1)+2})}{p^{\nu_p(n_1)+\nu_p(n_2)+2}} \Bigr),
\end{align*}
which completes the proof of Remark \ref{rem4-2}. \qed
\end{proof}

If we take $f=\mu$ in Theorem \ref{th3-4}, then it is obvious that 
\begin{equation}
\nonumber
\mu(q)=\sum_{n=1}^\infty a(n) c_q(n)
\end{equation}
holds where $a(n)=\delta(n).$ The following example is an extension of the above trivial example.

%%%%%%%%%%%%%%%%%%%%%%%%%%%%%%%% Example 4.2 %%%%%%%%%%%%%%%%%%%%%%%%%%%%%%%%%%

\begin{exa}
\label{ex4-2}
We have
\begin{equation}
\nonumber
 \mu (q_1 q_2)= \frac{1}{\zeta(2)} \sum_{n_1,n_2=1}^\infty \frac{\mu (K((n_1,n_2))) \varphi  (K((n_1,n_2)))}{n_1 n_2 \psi (K( n_1 n_2 ))}  c_{q_1} (n_1) c_{q_2} (n_2) .
\end{equation}
\end{exa}

\begin{proof}
Let $f(q_1,q_2)=\mu (q_1 q_2).$ Then it is easy to see that
\begin{align*}
& (f*\bold{1})(p^k,1)=(f*\bold{1})(1,p^k)=0 \ \ \mathrm{if} \ \ k \geqq 1, \\
& (f*\bold{1})(p^k,p^{\ell})=-1   \quad \mathrm{if} \ \ k, \ell \geqq 1. 
\end{align*}
From this we see that (\ref{eq3:th4-4}) holds. We have
\begin{align*}
& \frac{(f*\bold{1})(p^k,p^{\ell})}{p^{k+\ell}}-\frac{(f*\bold{1})(p^{k+1},p^{\ell})}{p^{k+\ell+1}}-\frac{(f*\bold{1})(p^k,p^{\ell+1})}{p^{k+\ell+1}}+\frac{(f*\bold{1})(p^{k+1},p^{\ell+1})}{p^{k+\ell+2}} \\
& = \left\{ \begin{array}{ll} 
  1-1/p^2  & \mathrm{if} \ \ k=\ell=0  \\ 1/p^{\ell+1} -1/p^{\ell+2} &  \mathrm{if}  \ \ k=0, \ \ \ell \geqq 1 \\ 1/p^{k+1} -1/p^{k+2} &  \mathrm{if}  \ \ k \geqq 1, \ \ \ell =0 \\ -1/p^{k+\ell} + 2/p^{k+\ell+1} -1/p^{k+\ell+2} &  \mathrm{if}  \ \ k , \ell \geqq 1 . \end{array}  \right.
\end{align*}
Therefore we have by Remark \ref{rem4-2} 
\begin{align*}
a(n_1,n_2) & = \prod_{p \nmid n_1 n_2 } (1-\frac{1}{p^2}) \prod_{ \substack{p \nmid n_1 \\ p \mid n_2} } ( \frac{1}{p^{\nu_p (n_2)+1}} -  \frac{1}{p^{\nu_p (n_2)+2}}) \prod_{ \substack{p \mid n_1 \\ p \nmid n_2} } ( \frac{1}{p^{\nu_p (n_1)+1}} -  \frac{1}{p^{\nu_p (n_1)+2}}) \\
& \quad \quad \times \prod_{ \substack{p \mid n_1 \\ p \mid n_2} }  (-\frac{1}{p^{\nu_p (n_1)+\nu_p (n_2)}} + \frac{2}{p^{\nu_p (n_1)+\nu_p (n_2)+1}}  - \frac{1}{p^{\nu_p (n_1)+\nu_p (n_2)+2}} )  \\
& = \prod_{p \nmid n_1 n_2 } (1-\frac{1}{p^2}) \prod_{ \substack{p \nmid n_1 \\ p \mid n_2} }  \frac{1}{p^{\nu_p (n_2)+1}} ( 1 -  \frac{1}{p}) \prod_{ \substack{p \mid n_1 \\ p \nmid n_2} }  \frac{1}{p^{\nu_p (n_1)+1}} ( 1 -  \frac{1}{p}) \\
& \quad  \quad \times \prod_{ \substack{p \mid n_1 \\ p \mid n_2} }  \frac{-1}{p^{\nu_p (n_1)+\nu_p (n_2)}}  (1-\frac{2}{p}+\frac{1}{p^2}) \\
 & = \prod_{p \in \mathcal{P}} (1-\frac{1}{p^2}) \prod_{ \substack{p \nmid n_1 \\ p \mid n_2} }  \frac{1}{p^{\nu_p (n_2)+1}} \frac{1-1/p}{1-1/p^2} \prod_{ \substack{p \mid n_1 \\ p \nmid n_2} }  \frac{1}{p^{\nu_p (n_1)+1}} \frac{1-1/p}{1-1/p^2}  \prod_{ \substack{p \mid n_1 \\ p \mid n_2} }  \frac{-1}{p^{\nu_p (n_1)+\nu_p (n_2)}}  \frac{(1-1/p)^2}{1-1/p^2} \\
& = \frac{1}{\zeta(2)} \prod_{ \substack{p \nmid n_1 \\ p \mid n_2} }  \frac{1}{p^{\nu_p (n_2)+1}} \frac{1}{1+1/p} \prod_{ \substack{p \mid n_1 \\ p \nmid n_2} }  \frac{1}{p^{\nu_p (n_1)+1}} \frac{1}{1+1/p}    \prod_{ p \mid (n_1 , n_2) }  \frac{-1}{p^{\nu_p (n_1)+\nu_p (n_2)}}  \frac{1-1/p}{1+1/p}  \\  
& = \frac{1}{\zeta(2)} \prod_{p \mid n_1 n_2 } \frac{1}{p^{\nu_p(n_1)+\nu_p (n_2)}}  \frac{1}{p(1+1/p)}  \prod_{p \mid (n_1, n_2) }  (-1)p(1-1/p)   \\
& = \frac{1}{\zeta(2)}  \frac{\mu (K((n_1,n_2))) \varphi  (K((n_1,n_2)))}{n_1 n_2 \psi (K( n_1 n_2 ))} ,
\end{align*}
which completes the proof of Example \ref{ex4-2}. \qed
\end{proof}

\hspace{-0.7cm} {\bf{Acknowledgment. }} \\
The author sincerely thanks the referees, whose comments and suggestions essentially improved this paper.

% use this if the bibliography list is very long and you want "References" 
% to appear in the headers of even pages.
% \bibliographymark{References}

\bibliographystyle{amsalpha}
% This would change the heading "References" by "Bibliography"
% \renewcommand{\refname}{Bibliography}

\authoraddresses{
Noboru Ushiroya\\
National Institute of Technology, Wakayama College, \\
77 Noshima Nada Gobo Wakayama, Japan \\
\email ushiroya@wakayama-nct.ac.jp

}

\end{document}